\documentclass[12pt]{amsart}

\usepackage{amsmath,amsfonts,amssymb,amsthm,mathtools,tikz-cd,tikz,xspace}
\usepackage{colonequals}
\usepackage{graphicx}
\usepackage[ruled,vlined,linesnumbered]{algorithm2e}

\usepackage[dvipsnames]{xcolor}
\definecolor{amethyst}{rgb}{0.6, 0.4, 0.8}

\usepackage[pagebackref=true, colorlinks, allcolors=amethyst]{hyperref}

\renewcommand*{\backref}[1]{}
\renewcommand*{\backrefalt}[4]{%
  \ifcase #1 %
    \relax
  \or
    $\uparrow$#2.%
  \else
    $\uparrow$#2.%
  \fi%
}

\usepackage{fullpage,url,amssymb,colonequals,algorithm2e}
\usepackage[shortlabels]{enumitem}
\usepackage{mathrsfs} 
\usepackage{tabularx} 
\usepackage{booktabs} 
\usepackage{pgfplots}
\pgfplotsset{compat=1.13}
\usetikzlibrary{arrows.meta,shapes.geometric, patterns}

\usepackage{subcaption}

\usepackage[T1]{fontenc}

\usepackage{cleveref}

\newcommand{\F}{\mathbb{F}}

\renewcommand{\P}{\mathbb{P}}
\newcommand{\Q}{\mathbb{Q}}

\newcommand{\Z}{\mathbb{Z}}

\newcommand{\calH}{\mathcal{H}}

\newcommand{\calO}{\mathcal{O}}

\DeclareMathOperator{\Aut}{Aut}

\DeclareMathOperator{\End}{End}

\DeclareMathOperator{\Gal}{Gal}

\DeclareMathOperator{\Hom}{Hom}

\DeclareMathOperator{\im}{im}

\DeclareMathOperator{\Pic}{Pic}

\DeclareMathOperator{\PSL}{PSL}

\DeclareMathOperator{\PGL}{PGL}

\DeclareMathOperator{\Sym}{Sym}

\DeclareMathOperator{\CM}{CM}

\theoremstyle{plain}
\theoremstyle{plain}
\theoremstyle{plain}

\counterwithin{equation}{section}

\newtheorem{lemma}[equation]{Lemma}
\newtheorem{proposition}[equation]{Proposition}
\newtheorem{corollary}[equation]{Corollary}
\newtheorem{conjecture}[equation]{Conjecture}
\theoremstyle{definition}
\newtheorem{definition}[equation]{Definition}
\newtheorem{example}[equation]{Example}
\theoremstyle{remark}
\newtheorem{remark}[equation]{Remark}

\theoremstyle{remark}

\newcommand{\LMFDBbase}{https://www.lmfdb.org}

\makeatletter
\DeclareRobustCommand{\ecl}[1]{\lmfdb@ecl#1\@nil}
\def\lmfdb@ecl#1.#2.#3\@nil{%
  \href{\LMFDBbase/EllipticCurve/Q/#1/#2/#3}{\texttt{#1.#2#3}}}
\makeatother

\makeatletter
\DeclareRobustCommand{\mf}[1]{\lmfdb@mf#1\@nil}
\def\lmfdb@mf#1.#2.#3.#4\@nil{%
  \href{\LMFDBbase/ModularForm/GL2/Q/holomorphic/#1/#2/#3/#4/}{\texttt{#1.#2.#3.#4}}}
\makeatother

\hypersetup{colorlinks=true, urlcolor=blue, citecolor=blue, linkcolor=blue}

\begin{document}
\title{Exceptional points on Atkin--Lehner quotients}
\author{Eran Assaf, Sachi Hashimoto, and Ari Shnidman}

\begin{abstract}
We study the rational points on the star curve $X_0^*(N) := X_0(N)/W(N)$, the quotient of the classical modular curve $X_0(N)$ by the full group of Atkin--Lehner involutions, for squarefree levels $N$. Rational points on $X_0^*(N)$ parameterize $\Q$-curves, i.e.\  elliptic curves $E/\overline{\Q}$ that are isogenous to all of their Galois conjugates. 
Elkies conjectures that $X_0^*(N)$ has only CM or cuspidal rational points for all large enough $N$.  We call any other rational points ``exceptional''. 
In this article, we provide new examples of exceptional points in genus 3 and 4, and we give evidence that no exceptional points exist in genus $g \geq 5$. 
Moreover, we investigate the underlying geometric reasons that might ``explain'' why these exceptional points arise in the first place, in the vein of Ogg and Mazur.
In particular, we propose geometric explanations for Galbraith's exceptional points on $X_0^*(137)$ and  $X_0^*(311)$.
\end{abstract}

\maketitle

\section{Introduction}

In his Eisenstein ideal paper, Mazur \cite{MazurEisenstein} showed that the modular curves $X_1(N)$ have non-cuspidal rational points if and only if the genus of $X_1(N)$ is $0$. In other words, the only rational points on the curves $X_1(N)$ are those that can be ``explained'' by geometry.  
Ogg pointed out that Mazur and Kenku's classification of the rational points on the curves $X_0(N)$ also has this feature: while there exist $X_0(N)$ of genus $g > 1$ with rational points that are neither CM nor cuspidal (henceforth {\it exceptional} rational points), these points nonetheless arise from cusps and CM points via geometry. 
For example, the two exceptional points on the genus $2$ curve $X_0(37)$ are images of cusps under the hyperelliptic involution. 

The curve $X_0(N)$ admits an action by the group of Atkin--Lehner involutions $W(N)$, and we consider the quotient curve $X_0^*(N) := X_0(N)/W(N)$, the \emph{star curve} of level $N$. Rational points on $X_0^*(N)$ give rise to low degree points on $X_0(N)$. They are also interesting in their own right, since they parametrize isogeny classes of \emph{$\Q$-curves}, elliptic curves $E/\overline{\Q}$ that are isogenous to all of their Galois conjugates.

As a generalization of the Serre Uniformity Conjecture to discrete arithmetic subgroups of $\PGL_2^+(\Q)$, Elkies \cite{ElkiesKCurves} conjectures that for any fixed $d \geq 1$, there are finitely many $N$ such that $X_0^*(N)$ has an exceptional point over a number field of  degree $d$. 
A main aim of this article is to give evidence for Elkies' conjecture and to formulate an effective conjecture over $\Q$.
In particular, we implement a broad search for exceptional points on squarefree levels in genus $3$ through $7$. Our search yields many new examples of $\Q$-curves in genus 3 and 4, including an example with $N = 510 = 2 \cdot 3 \cdot 5 \cdot 17$, but no examples in genus $g \geq 5$. 
To do this, we carefully apply Shimura reciprocity to provably and quickly enumerate all CM points of low degree on these curves. 

Our complementary aim is to determine to what extent the exceptional points that do exist ``arise from geometry'', in the vein of Ogg and Mazur. This question arose originally out of studying the intriguing examples of Galbraith \cite{Galbraithpplus}, who showed that the non-hyperelliptic genus $4$ curves $X_0^*(137)$ and $X_0^*(311)$ have exceptional points.

To explain these points, we look to recent work of Derickx--Hashimoto--Najman--Shnidman \cite{DHNS} who show that  all known rational points on all modular curves $X_G$, as $G$ varies over the open subgroups of $\mathrm{GL}_2(\widehat\Z)$, are explained by geometry in an appropriate sense. 
Building on these ideas, we show that all known exceptional points on star curves  $X_0^*(N)$ of squarefree level arise from cusps and CM points via three simple mechanisms that we call ``collinearity'', ``automorphism'', and ``elliptic covers''.  When feasible, we also check that these mechanisms do not give any other exceptional points. 

Combined with our broad search, this gives both empirical and theoretical evidence that we have found all exceptional points on all star curves $X_0^*(N)$ of squarefree level and that they all arise from CM and cuspidal points via geometry.

\subsection{Acknowledgements}

We thank Edgar Costa for invaluable computational assistance and resources. We thank Shiva Chidambaram for independently confirming Example \ref{ex: 286}, and we thank Abbey Bourdon and Filip Najman for helpful comments.
Assaf was supported by a grant from the Simons Foundation (SFI-MPS-Infrastructure-00008651, AS).
Hashimoto was partially supported by a grant from the National Science Foundation under award no.\ DMS-2501658.

\subsection{AI Statement}

We did not use AI tools to write this paper.
AI models (Claude Opus 5 and GPT-5.6 Sol) were used to produce code for the figures and tables in the paper with descriptive direction from the authors.
We used AI tools (Claude Opus 4.8, Opus 5 and Fable 5 (Thinking)) to review the paper, in particular to review the proofs of Section 3.3 on elliptic covers.
We also used AI tools in the development and review of the associated codebase.
In particular, Claude Fable, Opus 5, and Sonnet 5 were used to adapt initial author-written code for specific example curves, and generalize this code to work robustly across all levels, as well as to write tests and scripts.
We also used GPT-5.6 Sol and Claude to do code review.
We take full responsibility for the paper, the code and its output, and their mathematical correctness.

\section{Exceptional points on  Atkin--Lehner quotients of modular curves}
Let $N$ be a squarefree integer.  
We write $X_0^*(N)$ for the \emph{star curve} of level $N$, defined as the quotient of $X_0(N)$ by the full group of Atkin--Lehner automorphisms $W(N)$.
Elkies \cite{ElkiesKCurves} shows that, for any number field $K$, the non-cuspidal points in $X_0^*(N)(K)$ correspond to \emph{$K$-curves} (of degree $d \mid N$): elliptic curves $E/\overline{K}$ such that $E$ is $d$-isogenous to all its Galois conjugates.
In the case $K = \Q$, these $\Q$-curves are a mild generalization of elliptic curves defined over $\Q$; they arise as quotients of the curves $X_1(M)$ \cite{Ribet1992} \cite{KhareWintenberger}.

An important feature of star curves is that they tend to have many rational CM  points, corresponding to elliptic curves $E$ with complex multiplication over $\overline\Q$. Since $N$ is squarefree, $X_0^*(N)$ has exactly one (rational) cusp. We call cusps or CM points {\it special points}.  A non-special point in $X_0^*(N)(\Q)$ is called \emph{exceptional}.

Elkies conjectures that for $N \gg 1$, the points of $X_0^*(N)(\Q)$ are all special. In this section we give some evidence for a stronger boundedness conjecture.
\begin{conjecture}[Explicit boundedness of $\Q$-curves]
\label{conj:explicitbd}
Let $N>0$ be a squarefree integer. If the genus of $X_0^*(N)$ is at least 5, then the rational points are all special.
\end{conjecture}

We give additional evidence in the next section, where we give three geometric explanations for the existence of exceptional points in genus $g \leq 4$: collinearity, automorphism, and elliptic triple covers. In the case of collinearity, we give a heuristic for why this geometric phenomenon is unlikely to persist in higher genus. For automorphism and elliptic triple covers, we show that these explanations can only recover the known exceptional points.

For integers $q > 1$, there is a morphism $X_0^*(q^2 N) \to X_0^*(N)$ sending cusps to cusps, CM points to CM points, and exceptional points to exceptional points \cite{ElkiesKCurves}. 
This shows that 
every $\Q$-curve is isogenous to one of squarefree degree, which is one reason we focus on this case. 
Squarefree levels are also the most difficult to study: 
assuming the BSD conjecture, the rank of $J_0^*(N) := \mathrm{Jac} (X_0^*(N))$ is at least equal to the genus. 
In the non-squarefree case, one can often take advantage of the existence of rank 0 quotients \cite[Theorem 2]{HKLF} and use Mazur's formal immersion method.

\subsection{Method}

Our basic strategy to identify exceptional points proceeds as follows:
\begin{enumerate}
\item Construct a canonical model for $X_0^*(N)$ from modular forms $f_1, \dots, f_g \in S_2(\Gamma_0(N))$ that have Atkin--Lehner eigenvalue $+1$ for every $p \mid  N$.
\item Search for rational points in a box to find a lower bound on $X_0^*(N)(\Q)$.
\item Determine the exact number of rational CM points using Shimura reciprocity.
\item Compare: if we found more rational points than rational CM points plus 1 (for the rational cusp), then there is an exceptional point. 
\item Identify the CM points on the canonical model by evaluating $f_i(\tau)$.
\item Identify the cusp by taking a limit as $\tau \to i \infty$.
\item Find degree 2 CM points using Shimura reciprocity and identify them on the model.
\item ``Explain'' the exceptional point using the CM points and cusp.
\end{enumerate}
Steps (5) - (8) are strictly speaking not necessary for identifying exceptional points, but are used to ``explain'' them. We elaborate on what it means to ``explain'' exceptional points in Section \ref{sec:explanation}. One can find the code to carry out this method in \url{https://github.com/sachihashimoto/AtkinLehnerQuotients}; this repository can be used to access further information, including explicit equations for $X_0^*(N)$ and coordinates for each CM, cusp, and exceptional point. The exceptional-point tables are reproduced by
\path{tests/test_exceptional_tables.m} in that repository, and
Table~\ref{tab:all_planes} by
\path{scripts/make_plane_multiplicity_table.m}, which rebuilds every level
at \texttt{eval\_prec} $= 7000$.

There are only finitely many star curves of fixed genus $g$. For example, there are, respectively, $44,38,$ and $39$ star curves of squarefree level in genus $0, 1$, and $2$.

\begin{remark}
\label{rem:genus}
To list all levels $N$ such that $X_0^*(N)$ has genus $g$, we use the following.
All gonalities below are over $\overline{\Q}$. By Brill--Noether theory, any smooth projective genus $g$ curve has a map to $\P^1$ of degree at most $\lfloor \frac{g+3}{2} \rfloor$ \cite[Prop.~A.1(v)]{PoonenGonality}.  
Composing such a map with $X_0(N) \to X_0^*(N)$, we obtain the  upper bound $\gamma(X_0(N)) \leq 2^{\omega(N)}\lfloor \frac{g+3}{2} \rfloor$.
On the other hand, Abramovich \cite{Abramovich} gives the lower bound
$$
\gamma(X_0(N)) \geq \frac{\lambda_1}{24}[\PSL_2(\Z):\overline{\Gamma_0(N)}],
$$
where $\lambda_1$ is any lower bound for the smallest positive eigenvalue of the
Laplacian; the estimate $\lambda_1 \geq 975/4096$ of Kim--Sarnak
\cite[Appendix 2]{Kim} then yields
$\gamma(X_0(N)) \geq \frac{325}{2^{15}}[\PSL_2(\Z):\overline{\Gamma_0(N)}]$.
So
\[ \frac{[ \PSL_2(\Z):\overline{\Gamma_0(N)}] }{2^{\omega(N)}}
\leq \frac{2^{15}}{325}\left\lfloor \frac{g+3}{2} \right\rfloor \]
Combining this with a formula for the genus (see, e.g. \cite[Section 3]{GonzalezLario}) narrows down all candidate $N$ to the exact list.
\end{remark}

\subsection{Results}
We ran the above algorithm on the star curves of squarefree level for genus $2$ through $7$. (In genus $0$ and $1$, $X_0^*(N)$ has infinitely many rational points and  infinitely many exceptional points.)
This totals to 205 curves: 39 genus 2, 31 genus 3, 36 genus 4, 39 genus 5, 27 genus 6, and 33 genus 7.
The levels of $X_0^*(N)$ containing exceptional points for genus 3 and 4 are listed in Tables \ref{tab:exceptional_points_genus_3} and \ref{tab:exceptional_points_genus_4}. 
In genus 2, the numbers of exceptional points have been computed with elliptic and quadratic Chabauty calculations from the literature \cite{Qcurves,Genus2ANTS,AlgExamp}, and the CM and exceptional points were identified using $j$-invariants in \cite{Genus2ANTS}.

\begin{table}[ht]
\centering
\begin{tabular}{ccll}
\toprule
\textbf{Level}  & \textbf{Factorization} & \textbf{Collinearity} & \textbf{Automophism} \\
\midrule
$178$ & $2 \cdot 89$ &  $-388, -40$ & yes   \\
$183$ & $3 \cdot 61$ &  $-483$, $-75$ & yes \\
$246$ & $2 \cdot 3 \cdot 41$ &  $-264$, $-168$ & yes \\
$290$ & $2 \cdot 5 \cdot 29$ &  $-64$, $-24$ & yes\\
$310$ & $2 \cdot 5 \cdot 31$ &  $-55$, cusp  & no \\
$318$ & $2 \cdot 3 \cdot 53$ &  $-852$, $-372$ & yes  \\
$329$ & $7 \cdot 47$ &  $-52$, $-35$ & no \\
$430$ & $2 \cdot 5 \cdot 43$ &  $-220$, $-120$ & yes \\
$455$ & $5 \cdot 7 \cdot 13$ &  $-819$, $-195$ &yes \\
$510$ & $2 \cdot 3 \cdot 5 \cdot 17$ & $-480$, cusp  & yes  \\
\bottomrule
\end{tabular}
\caption{Exceptional points for genus $3$.  The Collinearity column records the
CM discriminants involved in a collinearity relation with the exceptional point; several
levels admit more than one, see Table~\ref{tab:all_planes}.}
\label{tab:exceptional_points_genus_3}
\end{table}

\begin{table}[ht]
\centering
\small
\begin{tabular}{clll}
\toprule
\textbf{Level} & \textbf{Factorization} & \textbf{Collinearity} & \textbf{Automorphism}   \\
\midrule
$137$ & $137$ & $-32$, $-11$, $-4$, cusp  & no\\
$311$ & $311$ & $-232$, $-123$, $-19$ & no \\
$370 $& $ 2 \cdot 5 \cdot 37$ & $-340$, $-260$, $-16$, cusp &yes \\
$399 $ & $ 3 \cdot 7 \cdot 19$ & $-1995$, $-483$, $-147$, $-84$  & no\\
\bottomrule
\end{tabular}
\caption{Exceptional points for genus $4$. The Collinearity column records the
CM discriminants involved in a collinearity relation with the exceptional point; see
Table~\ref{tab:all_planes} for the full list.}
\label{tab:exceptional_points_genus_4}
\end{table}

In genus 3 and 4, we find only one exceptional point per curve.
In genus 4, the exceptional points of level $N = 137$ and $311$ were discovered by Galbraith \cite{Galbraithpplus} who laid much of the computational groundwork for studying Atkin--Lehner quotients of modular curves and their CM points.  In his PhD Thesis, Galbraith \cite{GalbraithPhD} computed equations for Atkin--Lehner quotients of the modular curves $X_0(N)$ and identified CM points on these models by evaluation of $q$-expansions. 
We use and extend his method here.
The improvement we make over Galbraith's methods is to combine it with a general theoretical framework for determining all CM points on Atkin--Lehner quotients via Shimura reciprocity, allowing us to obtain a rigorous lower bound on the number of exceptional points.

In genus 5, 6, and 7, we do not find any exceptional points among the 99 curves. 
In particular, the following evidence supports the explicit boundedness conjecture (Conjecture~\ref{conj:explicitbd}).
\begin{proposition}
\label{prop:heightbd}
 Let $N$ be a squarefree integer such that $X_0^*(N)$ has genus $g\in \{5,6,7\}$. 
 Let $f_1,\dots,f_g$ be the basis, in Hermite normal form, of the saturated lattice of integral $q$-expansions of the weight-$2$ cusp forms of $\Gamma_0(N)$ fixed by every Atkin–Lehner involution, and let $X\subset\mathbb{P}^{g-1}$ be the associated canonical model of $X_0^*(N)$. 
 Then every rational point of $X$ of height at most $10^6$ is a special point. 
\end{proposition}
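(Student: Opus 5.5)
This is a finite computation, carried out level by level using the Method of the previous subsection. We organize it as follows.

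First, we enumerate the levels. By Remark~\ref{rem:genus}, the squarefree $N$ with $g(X_0^*(N))\in\{5,6,7\}$ form an explicit finite list: $39+27+33=99$ levels. For each level we proceed in three stages.

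\textbf{Canonical model.} We compute $S_2(\Gamma_0(N))$ with its Atkin--Lehner action and take the common $+1$ eigenspace. We saturate the lattice of integral $q$-expansions and put it in Hermite normal form to obtain $f_1,\dots,f_g$. We then solve for quadrics (and cubics if needed) vanishing on the image, verifying enough coefficients via the Sturm bound so that the equations are provably correct. This yields $X\subset\P^{g-1}$. Since none of these curves is hyperelliptic, the canonical map is an embedding. Non-hyperellipticity can be checked from the model, for instance because the ideal is generated in degree $2$ with the expected Hilbert function.

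\textbf{Point search.} We enumerate rational points of $X$ of naive height at most $10^6$. A brute-force search in $\P^{g-1}$ is hopeless. Instead we project to a plane model or to a lower-dimensional linear projection and sieve: search on a degree-$g$-ish plane curve or use a $p$-adic/modular sieve (as in Magma's \texttt{PointSearch}), then lift candidates back to $X$. To justify completeness up to the stated height, we use that projecting onto coordinates does not increase height, so any point of $X$ of height $\le 10^6$ gives a point of the image of height $\le 10^6$.

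\textbf{Special points.} Separately, we use Shimura reciprocity to determine the exact number of rational CM points on $X_0^*(N)$. These come from discriminants $D$ with class group generated, modulo the image of $W(N)$, appropriately, i.e.\ the orbits of Heegner-type points that are fixed by $\Gal$. We then locate each CM point and the unique cusp on $X$ by evaluating $(f_1(\tau):\dots:f_g(\tau))$ at CM points $\tau$ to high precision (\texttt{eval\_prec} $=7000$) and as $\tau\to i\infty$. Rational reconstruction gives exact coordinates, which we verify lie on $X$. Finally we check that every point found in the search is one of these.

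\textbf{Main obstacle.} The hard step is making the height-$10^6$ search genuinely exhaustive in dimension up to $6$ without exceeding computing resources. We would first project to a plane curve via a well-chosen $g-3$ linear forms, making sure the projection is birational so that only finitely many fibers need separate treatment. We would then run an $\ell$-adic sieve there, handling the finitely many base and singular points of the projection by hand. A secondary difficulty is certifying that the numerically identified CM coordinates are exact, which we handle by verifying that the reconstructed rational point satisfies the equations of $X$ and agrees with the Shimura-reciprocity count.
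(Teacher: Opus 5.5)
Your proposal is correct and follows essentially the same route as the paper, which runs its Method on all $99$ levels: build the canonical model from the Hermite-normal-form basis, search for rational points up to height $10^6$, count the rational CM points via Shimura reciprocity (Proposition~\ref{prop:degreeexact}), and match each found point to the cusp or a CM point by evaluating the $f_i$ to high precision. One caveat applies to both your proposal and the paper: checking that a reconstructed point lies on $X$ and that the tally agrees with the Shimura-reciprocity count does not by itself prove that the point equals the CM image, so full rigor would need the $j$-map or Schofer's formula, as the paper itself notes.
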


The coefficient sizes on these models are all quite small (0-3 digits, >75\% of non-zero coefficients are 1 digit) so it is reasonable to suspect that there are no more rational points.

\begin{remark}
It would be desirable to provably determine all exceptional points in low genus using the quadratic Chabauty method \cite{AlgExamp}. The main challenge is to compute the local heights at $N$.
\end{remark}

\begin{remark}
Significantly extending Proposition \ref{prop:heightbd}  seems infeasible without a large investment of computational resources, with searching for points of large height being the limiting factor. A point search up to height $10^5$ on the genus 8 curves found no exceptional points, and took 28 CPU hours on a MacBook Pro (Apple M4 Pro, 24 GB RAM).

But, as we discuss in Section \ref{sec:shimura}, computing the number of rational CM points on $X_0^*(N)$ and their discriminants is fast (about $0.2$s per curve in genus $7$). 
In this way, we can conjecturally compute $\#X_0^*(N)(\Q)$ for squarefree $N$; see Figure~\ref{fig:x0star-point-distributions}. 
\end{remark}

\begin{figure}[ht]
  \centering
\includegraphics[width = 400pt]{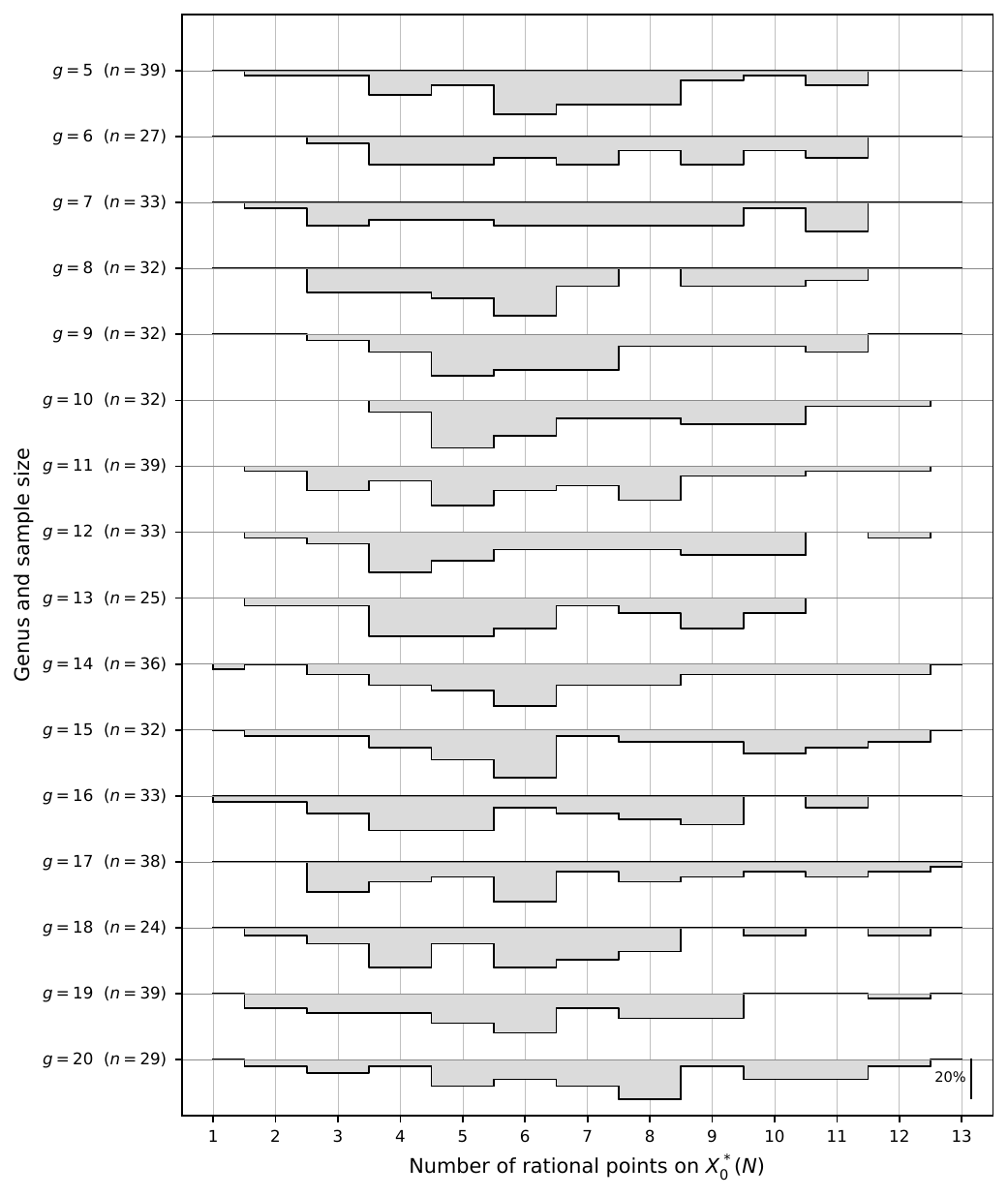}
  \caption{
  Distribution of the number of rational special points in
  \(X_0^*(N)(\Q)\) by genus, normalized by  \(n=\) number of curves. The vertical scale is the same across genera, and is labeled in the $g = 20$ row. Assuming Conjecture \ref{conj:explicitbd}, these numbers are also \(\#X_0^*(N)(\Q)\).
  }
  \label{fig:x0star-point-distributions}
\end{figure}

\subsection{Shimura reciprocity}

\label{sec:shimura}

To study the rationality properties of CM points on $X_0(N)$ and its Atkin--Lehner quotients we use \emph{Shimura reciprocity}. 
Let $R \subset \calO_K$ be an order of conductor $f$ in an imaginary quadratic field $K$. 
Let $\CM(R)$ be the set of points in $X_0(N)(\overline\Q)$ corresponding to cyclic $N$-isogenies $E \to E'$, where $E$ and $E'$ are elliptic curves with  CM by $\calO_E$ and $ \calO_{E'}$ respectively, such that $R = \calO_{E} \cap \calO_{E'}$.  The set $\CM(R)$ also corresponds bijectively to (conjugacy classes of) optimal embeddings of $R \hookrightarrow \calO $ into the Eichler order $\calO := (\begin{smallmatrix} \Z & \Z \\ N\Z & \Z \end{smallmatrix}) \subset \mathrm{Mat}_2(\Z)$ of level $N$. 
The set $\CM(R)$ is preserved by the action of $\Gal_\Q$ and Shimura reciprocity is an explicit description of this action in terms of optimal embeddings. 

Write $\overline{\CM}(R)$ for the image of $\CM(R)$ in $X_0^*(N)(\overline\Q)$ 
and define 
\[
N(R) \colonequals \prod_{p \mid  N, \left(\frac{K}{p}\right)=1 \text{ or } p \mid  f}  p, \quad
N^*(R) \colonequals \prod_{p \mid  N, \left(\frac{K}{p}\right)=1, p \nmid  f}  p
\]
In this subsection we prove the following proposition, and show how one can refine it to give the field of definition of the points in  $\overline{\CM}(R)$.
\begin{proposition}
\label{prop:degreeexact}
Assume $N$ is squarefree and $\CM(R) \ne \emptyset$.
The field of definition of any $P \in \overline{\CM}(R)$ has degree 
\begin{equation}
\label{eq:vareps}
[\Q(P) : \Q] = \frac{h(R)}{ 2^{\omega (N/N(R)) - \varepsilon(N,R) }},  \qquad 
\varepsilon(N,R)=
\begin{cases}
1 & \operatorname{disc}(R)\in \bigcup_{\substack{m\mid N, \\ m>1}} \Delta(m),
  \\ 0 & \text{otherwise,}
\end{cases} 
\end{equation}
where $\Delta(m)$ is the set of discriminants of orders in $\Q(\sqrt{-m})$ whose CM points are fixed by $w_m$ (see also \eqref{eq:delta_m}).
\end{proposition}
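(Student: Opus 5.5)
We would compute the orbit of a point $P\in\overline{\CM}(R)$ under $\Gal_\Q$ by counting: $[\Q(P):\Q]$ equals the size of the Galois orbit of $P$, which is the size of the Galois orbit of a lift $\tilde P\in\CM(R)$ divided by the size of its stabilizer under $W(N)$ modulo the Galois action. Concretely, we use that $\CM(R)$ is a torsor-like set: by the theory of optimal embeddings into the Eichler order $\calO$, $\#\CM(R) = h(R)\cdot 2^{\omega(N(R))}$ (for each $p\mid N$ split in $K$ or dividing $f$ there are two local optimal embedding classes, for inert/ramified primes not dividing $f$ one). Shimura reciprocity says $\Gal(\overline\Q/K)$ acts on $\CM(R)$ through $\Pic(R)$ freely, with $h(R)$ elements per orbit, and complex conjugation swaps orbits in a controlled way. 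The group $W(N)\cong(\Z/2)^{\omega(N)}$ also acts on $\CM(R)$, commuting with the $\Pic(R)$-action.

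The key steps, in order. First, describe the $W(N)$-action locally: for $p\mid N(R)$, $w_p$ exchanges the two local orientations at $p$ (possibly composed with the action of a prime ideal above $p$ in $\Pic(R)$), so the subgroup $W(N(R))$ acts freely on the set of $\Pic(R)$-orbits, and together $\Pic(R)$ and $W(N(R))$ act transitively on $\CM(R)$. For $p\mid N/N(R)$ (inert or ramified, not dividing $f$—in fact $\CM(R)\neq\emptyset$ forces ramified), $w_p$ preserves the local orientation and acts as the class $[\mathfrak p]$ of the ramified prime in $\Pic(R)$. Second, image in the quotient: the $\Gal(\overline\Q/K)$-orbit of $P$ in $X_0^*(N)$ is the image of $\Pic(R)$ in $\Pic(R)\times W(N)$ modulo the subgroup generated by the elements $([\mathfrak p_p], w_p)$ for $p\mid N/N(R)$ and those identifying $W(N(R))$ with orbit swaps; this yields an orbit of size $h(R)/\#\langle[\mathfrak p_p]: p\mid N/N(R)\rangle$. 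The ramified prime classes are 2-torsion and, by genus theory, independent except for one possible relation: $\prod_{p\in S}[\mathfrak p_p]$ is trivial exactly when $\prod_{p\in S}p$ is the norm of a principal ideal, i.e. $\mathfrak p=(\sqrt{-m})$ with $m\mid N$, which is the condition $\operatorname{disc}(R)\in\Delta(m)$ (the point is then fixed by $w_m$). This gives $\#\langle\cdot\rangle = 2^{\omega(N/N(R))-\varepsilon(N,R)}$. Third, pass from $K$ to $\Q$: show complex conjugation maps the $\Gal(\overline\Q/K)$-orbit of $P$ in $X_0^*(N)$ to itself (conjugation sends an optimal embedding to its conjugate, which equals an Atkin–Lehner translate times an element of $\Pic(R)$, since $w_N$ realizes the dual isogeny), and that $\Q(P)\cap K=\Q$, i.e. $P$'s orbit under $\Gal(\overline\Q/\Q)$ has the same size as under $\Gal(\overline\Q/K)$ — this uses that the $j$-invariants generate ring class fields whose real subfield has degree $h(R)$ over $\Q$.

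The main obstacle is the bookkeeping in step two: precisely identifying the action of $w_p$ on optimal embeddings as the ideal class $[\mathfrak p]$ for ramified $p$ (and as an orientation flip for split $p$ or $p\mid f$), and verifying that the only relations among the ramified classes are those coming from $\sqrt{-m}$, including the degenerate small cases like $\operatorname{disc}(R)=-4m$ versus $-m$ and the orders with extra units. I would first handle the maximal-order case via the classical description of Heegner points, then treat $p\mid f$ by noting both local embeddings become optimal and $w_p$ swaps them.
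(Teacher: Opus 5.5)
Your proposal is correct. It rests on the same two inputs as the paper: the free transitive action of $W(R)\times\Gal(H_R/K)$ on $\CM(R)$, and the Gonz\'alez--Rotger description of $w_p$, for $p\mid N/N(R)$, as the Galois element attached to the ramified prime $\mathfrak p_p$. Where you differ is in how the denominator is produced.

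The paper imports $\#\CM(R)=h(R)2^{\omega(N(R))}$ and $\#\overline{\CM}(R)=\#\CM(R)/2^{\omega(N)-\varepsilon(N,R)}$ from Ogg--Eichler. It then notes that $\Gal_\Q$ is transitive on $\overline{\CM}(R)$ and applies orbit--stabilizer, so the correction $\varepsilon$ is hidden entirely inside Ogg's count. You instead identify the $\Gal_K$-orbit of $P$ with $\Pic(R)/\langle[\mathfrak p_p] : p\mid N/N(R)\rangle$; freeness of the $W(R)$-part is what rules out contributions from $w_{m_1}$ with $m_1\mid N(R)$. You then show this subgroup has order $2^{\omega(N/N(R))-\varepsilon(N,R)}$. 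This buys an actual explanation of $\varepsilon$ and of the list $\Delta(m)$: $\varepsilon=1$ exactly when some $w_m$ with $m>1$ fixes the points of $\overline{\CM}(R)$, i.e.\ when the product of the ramified primes over $m$ is principal in $R$. The paper's route is shorter but opaque on this point.

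Two small corrections. First, the relation analysis needs no genus theory, and your description of the relation is slightly too narrow. Write $m_T=\prod_{p\in T}p$ and suppose $(\alpha)=\prod_{p\in T}\mathfrak p_p$ in $R$; then $\alpha^2=u\,m_T$ for some $u\in R^\times$.
\begin{itemize}
\item $u=-1$ gives $\sqrt{-m_T}\in R$, i.e.\ $\operatorname{disc}(R)\in\{-m_T,-4m_T\}$.
\item $u=\pm i$ in $\Z[i]$ gives $\alpha=1\pm i$ with $m_T=2$. This is why $-4\in\Delta(2)$ even though $\sqrt{-2}\notin\Z[i]$, and your ``$\mathfrak p=(\sqrt{-m})$'' misses it.
\item The extra units of $\Z[\zeta_3]$ only reproduce $\sqrt{-3}$ up to a unit.
\end{itemize}
This argument works verbatim for non-maximal $R$, because $\mathfrak p_p$ is invertible for $p\nmid f$. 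Since $K$ determines $m$, at most one $m>1$ occurs, which gives exactly the exponent $\omega(N/N(R))-\varepsilon(N,R)$.

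Second, your third step is automatic. Your first step already shows that $\Pic(R)$ acts transitively on $\overline{\CM}(R)$, and $\overline{\CM}(R)$ is $\Gal_\Q$-stable. Hence the $\Gal_\Q$- and $\Gal_K$-orbits of $P$ coincide, and $[\Q(P):\Q]=[K(P):K]$. The appeal to real subfields of ring class fields is unnecessary, and in any case $\Q(P)$ is not generated by a $j$-invariant.
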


\begin{proof}
We first justify that the points in  $\CM(R)$   all have isomorphic fields of definition.
Let $H_R$ be the ring class field of $R$. 
Write $W(R) = \{ w_m \in W(N) : m \mid N(R)\}.$
The group
$W(R)\times \Gal(H_R/K)$ acts freely and transitively on $\CM(R)$ by \cite[Proposition~5.6]{GRgenus1}.
Thus for any  
$P, P' \in \CM(R)$, there exists $w \in W(R)$ and $\sigma \in \Gal(H_R/K)$ such that $P' = w(P^\sigma)$. Since $w$ is defined over $\Q$, it follows that $\Q(P') = \Q(P^\sigma) = \Q(P)^\sigma$. Hence $\Q(P') $ and $\Q(P)$ are isomorphic over $\Q$. Since the quotient map $X_0(N) \to X_0^*(N)$ is defined over $\Q$ and invariant under $W(R)$, this also holds for the points in $\overline{\CM}(R)$. 

Work of Ogg and Eichler (see Proposition~\ref{prop: oggeichler}) allows us to count the sizes of $\CM(R)$ and $\overline{\CM}(R)$, from which
Proposition~\ref{prop:degreeexact} follows.  By Proposition~\ref{prop:GR},
every $w_m$ with $m \mid N$ acts on $\CM(R)$ through $\Gal(H_R/K)$, or through
complex conjugation composed with such an element; since
$W(R) \times \Gal(H_R/K)$ acts freely and transitively on $\CM(R)$ by
\cite[Prop.~5.6]{GRgenus1}, it follows that $\Gal_\Q$ acts transitively on
$\overline{\CM}(R)$.  By orbit--stabilizer, the degree of the field of
definition of any point of $\overline{\CM}(R)$ is therefore
$\#\overline{\CM}(R)$, which is the asserted value by
\eqref{eq:number of CM points} and Proposition~\ref{prop: oggeichler}.
\end{proof}

\begin{remark} 
Explicitly, for each nontrivial divisor $m\mid N$
\begin{equation}
\label{eq:delta_m}
  \Delta(m)=
  \begin{cases}
  \{-4,-8\} & m=2,\\
  \{-m,-4m\} & m\equiv 3 \pmod 4,\\
  \{-4m\} & m\equiv 1,2 \pmod 4,\ m>2,
  \end{cases}
\end{equation}
\end{remark}

\begin{proposition}[Ogg, Eichler {\cite{Ogg}}]\label{prop: oggeichler}
Assume $N$ is squarefree.
The set $\CM(R)\neq \emptyset$ if and only if no prime $p\mid N$ is inert in $R$, in which case 
$$  \#\operatorname{CM}(R)=h(R)\cdot 2^{\omega(N(R))}, $$ where $h(R)=\#\operatorname{Pic}(R)$. 
On  
$X_0^*(N)$, 
\begin{equation}
\label{eq:number of CM points}
\#\overline{\operatorname{CM}}(R)=\frac{\#\operatorname{CM}(R)}{2^{\omega(N)-\varepsilon(N,R)}}.
\end{equation}
\end{proposition}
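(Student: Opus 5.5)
I would prove the Ogg--Eichler count in two stages: first on $X_0(N)$, by counting optimal embeddings, and then on $X_0^*(N)$, by an orbit--stabilizer argument for $W(N)$ acting on $\CM(R)$.

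\textbf{Stage 1: the count on $X_0(N)$.} Use the identification of $\CM(R)$ with $\calO^\times$-conjugacy classes of optimal embeddings $R\hookrightarrow\calO$ into the Eichler order of level $N$. Since $\calO$ has class number one in $\mathrm{Mat}_2(\Q)$, Eichler's local--global formula expresses the number of classes as a product:
\[ h(R)\prod_{p\mid N} m_p(R), \]
where $m_p(R)$ is the number of $\calO_p^\times$-classes of optimal embeddings $R_p\hookrightarrow\calO_p$.

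For squarefree $N$, each $\calO_p$ with $p \mid N$ is the Iwahori order $\left(\begin{smallmatrix}\Z_p&\Z_p\\ p\Z_p&\Z_p\end{smallmatrix}\right)$. An optimal embedding of $R_p$ into it amounts to choosing an $R_p$-stable lattice chain $L\supset L'\supset pL$ of index $p$ with $R_p=\End(L)\cap\End(L')$. Equivalently, we choose an $R_p$-stable line in $L/pL\cong R_p/pR_p$, up to the action of $R_p^\times$. I would compute the local factor case by case:
\begin{itemize}
\item If $p$ is split in $K$ and $p\nmid f$, there are two stable lines, so $m_p=2$.
\item If $p$ is ramified and $p\nmid f$, the unique stable line is the image of the uniformizer, so $m_p=1$.
\item If $p$ is inert and $p\nmid f$, there is no stable line, so $m_p=0$.
\item If $p\mid f$, the lines stable under $R_p=\Z_p+p\,\calO_{K,p}$ count the embeddings in which $R_p$ is the intersection of the two endomorphism rings. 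By optimality, exactly the two choices $(\calO_{E},\calO_{E'})$ of conductors $f$ and $f/p$, in either order, survive, so $m_p=2$.
\end{itemize}
Taking the product gives $h(R)\,2^{\omega(N(R))}$ when no $p\mid N$ is inert, and $0$ otherwise.

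\textbf{Stage 2: the count on $X_0^*(N)$.} The group $W(N)\cong(\Z/2)^{\omega(N)}$ acts on $\CM(R)$, so $\#\overline{\CM}(R)=\#\CM(R)/|W(N)|$ provided the action is free. I would show that every nontrivial $w_m$ acts freely except in the situation singled out by $\varepsilon$. The key input is the description, via Proposition~\ref{prop:GR}, of how $w_m$ acts on embeddings: through a class $[\mathfrak a_m]\in\Pic(R)$, composed with complex conjugation for the divisors $m$ where that is required. A fixed point of $w_m$ is a $\Gamma_0(N)$-class of $\tau$ fixed by an element of norm $m$ of the normalizer. Such an element generates an order containing $\sqrt{-m}$ (or $(1+\sqrt{-m})/2$), which forces $\Q(\sqrt{-m})=K$ and $\operatorname{disc}(R)\in\Delta(m)$. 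Conversely, for $\operatorname{disc}(R)\in\Delta(m)$ one checks that $w_m$ acts as $[\mathfrak a_m]=[\sqrt{-m}]$, which is trivial in $\Pic(R)$, so every point of $\CM(R)$ is fixed by $w_m$.

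Since $K$ determines $m$ up to at most one value among the divisors of $N$, the $w_m$ fixing points form a subgroup of order $2^{\varepsilon(N,R)}$. This subgroup fixes all of $\CM(R)$, and the quotient acts freely. This gives
\[ \#\overline{\CM}(R)=\frac{\#\CM(R)}{2^{\omega(N)-\varepsilon(N,R)}}, \]
which is \eqref{eq:number of CM points}.

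\textbf{Expected obstacle.} I expect the main difficulty to be the freeness step. For a product $w_{m}w_{m'}$, a fixed point corresponds to an element of norm $mm'$, so I need $\Delta(\cdot)$ to be taken over all divisors $m$ of $N$, not only primes; the union in the definition of $\varepsilon$ handles exactly this. I also need to verify that two distinct $m$ cannot both work for the same $R$. For $m\ne m'$ with both $-m$ and $-m'$ in the same field $K$, we would need $mm'$ to be a square, which is impossible for squarefree $m\neq m'$, apart from the $m=2$ case with $\Delta(2)=\{-4,-8\}$. That case requires a separate check, and I would first test $-4$ against $m=2$ by hand.
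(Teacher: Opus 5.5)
The paper gives no proof of this statement; it is quoted from Ogg and Eichler. Your two-stage plan is the standard argument behind that citation: Eichler's embedding formula for the class-number-one Eichler order with local factors $2,1,0,2$, followed by orbit--stabilizer with stabilizer $\{1,w_m\}$ or trivial. The overall structure is sound, but two steps are wrong as written and need repair.

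\emph{The local factor at $p\mid f$.} Your reduction to ``an $R_p$-stable line in $L/pL\cong R_p/pR_p$'' assumes $\End(L)=R_p$.

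If $p^k\,\|\,f$, then $R_p=\Z_p+p^k\calO_{K,p}$ with $k\ge 1$ arbitrary, not only $k=1$. Here $R_p/pR_p\cong\F_p[\epsilon]/(\epsilon^2)$ has exactly one stable line, and its preimage is $pR'_p$ with $R'_p=\Z_p+p^{k-1}\calO_{K,p}$. So your own framework would return $m_p=1$. The missing class comes from chains in which $L'$, rather than $L$, has endomorphism ring exactly $R_p$.

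A clean argument goes as follows.
\begin{enumerate}
\item By optimality, at least one lattice of the chain has ring exactly $R_p$.
\item They cannot both have ring $R_p$: a lattice with ring $R_p$ has a unique $R_p$-stable sublattice of index $p$, and that sublattice has ring $R'_p$. This is the absence of horizontal $p$-isogenies when $p\mid f$.
\item The lattice with ring $R_p$ is unique up to $K_p^\times$, since $\Pic(R_p)=1$.
\item It determines its partner: its unique stable sublattice is $L'$ in one orientation and $pL$ in the other.
\end{enumerate}
This gives exactly one class per orientation, hence $m_p=2$. Your sentence ``by optimality exactly the two choices survive'' asserts this answer without showing that each choice contributes one class. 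A priori, the $p-\left(\frac{K}{p}\right)$ descending sublattices (resp.\ $p$ of them when $k\ge 2$) of a lattice with ring $R'_p$ could fall into several orbits.

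\emph{The fixed points of $w_m$.} A matrix in the $w_m$-coset is $W=\left(\begin{smallmatrix} ma & b\\ Nc & md\end{smallmatrix}\right)\in\calO$, with $\det W=m$ and $\operatorname{tr}W=m(a+d)$. Ellipticity gives $m(a+d)^2<4$, so $\operatorname{tr}W=0$ (i.e.\ $W^2=-m$) is forced only for $m\ge 5$.
\begin{itemize}
\item For $m=3$ one may have $\operatorname{tr}W=\pm 3$, giving $\Z[W]=\Z[\zeta_3]$.
\item For $m=2$ one may have $\operatorname{tr}W=\pm 2$. Then $W^2\mp 2W+2=0$, so $\Z[W]\cong\Z[i]$ and $K=\Q(i)\neq\Q(\sqrt{-2})$.
\end{itemize}
Thus your claim that the element ``forces $\Q(\sqrt{-m})=K$'' is false for $m=2$. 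This trace case is exactly where $-4\in\Delta(2)$ comes from.

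Once the trace bound is in place, $R\supseteq\Z[W]$ gives $\operatorname{disc}(R)\in\Delta(m)$ directly. Uniqueness of $m$ is then immediate: $-4$ and $-8$ lie only in $\Delta(2)$, and any other discriminant in some $\Delta(m)$ determines the squarefree $m$ through $K=\Q(\sqrt{-m})$. So the ``separate check'' you postpone is not a possible collision of two values of $m$; it is this overlooked trace case.

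Similarly, in your converse for $D=-4$ the relevant ideal is $\mathfrak a_2=(1+i)$, not $(\sqrt{-2})$. It is still principal, so $w_2$ fixes all of $\CM(\Z[i])$. With these repairs, your orbit--stabilizer conclusion goes through.
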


As a corollary of Proposition \ref{prop:degreeexact}, we see that for any fixed $N$ and $d$, there are finitely many orders $R$ such that $\overline{\CM}(R)$ contains degree $d$ points.
\begin{corollary}
\label{prop:bdclassno}
If $\overline{\CM}(R)$ gives points of degree $d$ on $X_0^*(N)$, then 
 $h(R)\leq d \cdot 2^{\omega(N)}.$
\end{corollary}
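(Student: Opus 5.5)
The corollary should follow directly from the degree formula in Proposition~\ref{prop:degreeexact}. Suppose $P \in \overline{\CM}(R)$ has $[\Q(P):\Q] = d$; in particular $\CM(R)\neq\emptyset$, so the proposition applies. Solving \eqref{eq:vareps} for the class number gives
\[
h(R) \;=\; d\cdot 2^{\omega(N/N(R)) - \varepsilon(N,R)}.
\]
It therefore suffices to show that the exponent $\omega(N/N(R)) - \varepsilon(N,R)$ is at most $\omega(N)$.

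The exponent is bounded using only that $N$ is squarefree and $N(R)\mid N$. Both facts hold by construction: $N(R)$ is a product of distinct primes dividing $N$. Hence $N/N(R)$ is a divisor of $N$, and $\omega(N/N(R)) = \omega(N) - \omega(N(R)) \le \omega(N)$. Also $\varepsilon(N,R)\in\{0,1\}$, so $-\varepsilon(N,R)\le 0$. Combining these,
\[
h(R) \;\le\; d\cdot 2^{\omega(N)}.
\]

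There is essentially no obstacle here. The only point to watch is the meaning of ``gives points of degree $d$''. By the first part of the proof of Proposition~\ref{prop:degreeexact}, all points of $\overline{\CM}(R)$ have isomorphic fields of definition, so their common degree is a single number. The hypothesis can therefore be read as saying that this common degree equals $d$.

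Under the weaker reading that the degree is at most $d$, the same inequality holds by monotonicity.

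The bound is slightly lossy. It discards both $\omega(N(R))$ and $\varepsilon(N,R)$, and one could sharpen it to $h(R)\le d\cdot 2^{\omega(N)-\omega(N(R))}$. The stated bound, however, depends only on $N$ and $d$. That uniformity is what gives the finiteness remark preceding the corollary: finitely many orders $R$ have class number below a fixed bound.
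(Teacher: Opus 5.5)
Your proof is correct and matches the paper's intended argument. The paper gives no separate proof; it presents the bound as an immediate corollary of Proposition~\ref{prop:degreeexact}, obtained exactly as you do, by solving \eqref{eq:vareps} for $h(R)$ and using $\omega(N/N(R))\le\omega(N)$ and $\varepsilon(N,R)\ge 0$.
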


More generally, to compute the exact field of definition of a point in $\Q(\overline{\CM}(R))$, one can follow closely the methods of \cite[Appendix]{GRgenus1} which we have implemented for Atkin--Lehner quotients of Shimura curves in \cite{GitHub}. 
The most subtle point is to describe the action of the Atkin--Lehner involutions $w_m$ in terms of the $\Gal(H_R/K)$-action. We summarize this below.

\begin{proposition}[{Gonz\'alez, Rotger \cite[Lemma 5.9, Lemma 5.10]{GRgenus1}}]
\label{prop:GR}
If $m\mid N/N(R)$, then $w_m$ acts by an element $\sigma_m \in \Gal(H_R/K)$ on $\CM(R)$.
In addition, $w_{N^*(R)}$ acts by complex conjugation composed with an element $\sigma \in \Gal(H_R/K)$.
\end{proposition}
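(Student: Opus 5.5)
The idea is to make the action of $w_m$ on $\CM(R)$ explicit through optimal embeddings and then compare it with the Galois action given by the Shimura reciprocity law.

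\textbf{Setup.} Identify $\CM(R)$ with $\calO^\times$-conjugacy classes of optimal embeddings $\varphi\colon R\hookrightarrow\calO$, where $\calO$ is the Eichler order of level $N$. For each $m\mid N$, choose $\omega_m\in\calO$ normalizing $\calO$ with $\det\omega_m=m$. Then $w_m$ acts by $\varphi\mapsto\omega_m\varphi\omega_m^{-1}$. Next, work locally at each prime $p\mid N$. If $p\nmid N(R)$, then $p$ is ramified in $K$ and $p\nmid f$ (since $p$ is not inert, not split, and not dividing $f$). In this case the local embedding $\varphi_p\colon R_p\hookrightarrow\calO_p$ is unique up to $\calO_p^\times$-conjugacy. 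Consequently $\varphi(\pi_p)$, for a uniformizer $\pi_p$ of $K_p$ with norm $p\cdot u$, normalizes $\calO_p$ and can serve as a local Atkin--Lehner element.

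\textbf{Case $m\mid N/N(R)$.} Let $\mathfrak p_m=\prod_{p\mid m}\mathfrak p$, the product of the ramified primes above $m$; this is an invertible $R$-ideal of norm $m$. Set $\mathfrak a=\mathfrak p_m$. Then the adelic element $\varphi(\hat\pi_{\mathfrak a})$ equals $\omega_m$ up to $\hat\calO^\times$ at each $p\mid m$, and lies in $\hat\calO^\times$ elsewhere. By the adelic description of the action of $\Pic(R)$ on optimal embeddings (the Shimura reciprocity law on $\CM(R)$), twisting by $[\mathfrak a]$ coincides with conjugation by $\omega_m$ modulo global units. Hence $w_m$ acts as $\sigma_m=\mathrm{Art}([\mathfrak p_m])\in\Gal(H_R/K)$. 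The main point to check is that the global element realizing the class matches $\omega_m$ globally, not just locally. This follows from strong approximation / local–global compatibility for Eichler orders, since $\calO$ has class number one.

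\textbf{Case $w_{N^*(R)}$.} For $p\mid N^*(R)$ the prime $p$ splits in $K$ and does not divide $f$. Locally there are then two classes of optimal embeddings, distinguished by which prime $\mathfrak p$ or $\bar{\mathfrak p}$ above $p$ "points toward" the level structure, i.e.\ the orientation. Conjugation by $\omega_p$ swaps the two orientations. Complex conjugation acts on CM points by $\varphi\mapsto\varphi\circ\bar{\ }$, which also swaps every orientation at split primes. At primes dividing $f$ or $N/N(R)$ it preserves the local class up to an element of $\Pic(R)$, because these local classes are unique or are handled by the previous case.

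Thus $w_{N^*(R)}\circ c$ preserves all local orientation data. Both $\Pic(R)$ and $W(R)$ act freely and transitively on $\CM(R)$ by \cite[Prop.~5.6]{GRgenus1}, and fixing all orientations cuts this down to a $\Pic(R)$-torsor. It follows that $w_{N^*(R)}\circ c$ acts by some $\sigma\in\Gal(H_R/K)$.

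\textbf{Expected obstacle.} The hardest step is the last one: checking that complex conjugation intertwines correctly with the reciprocity map, i.e.\ that $c\,\sigma_{\mathfrak a}\,c=\sigma_{\bar{\mathfrak a}}$, and that it preserves the local types at primes $p\mid f$ with $p\mid N$. I would handle this by writing explicit quadratic forms / Heegner forms $[Na,b,c]$ and checking the action of $c$ (sending $[A,B,C]\mapsto[A,-B,C]$) against that of $w_m$ directly.
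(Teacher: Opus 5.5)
The overall strategy works. For the first claim you use Shimura reciprocity with $\varphi(\pi_p)$ as the local Atkin--Lehner element; for the second you show $w_{N^*(R)}\circ c$ preserves every local type and then invoke simple transitivity. The paper gives no argument of its own and only cites \cite{GRgenus1}. However, the step at primes $p\mid\gcd(N,f)$ is wrong as written. You say the local classes there are ``unique or handled by the previous case'', but neither holds.

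\textbf{The primes dividing $\gcd(N,f)$.} By the Eichler--Ogg count such a $p$ has \emph{two} local classes of optimal embeddings, which is exactly why $p\mid N(R)$. The previous case only treats ramified $p\nmid f$. The two classes are distinguished by which of the lattices $L_1\supset L_2\supset pL_1$ defining $\calO_p$ has the smaller order $\varphi(K_p)\cap\End(L_i)$; these conductors differ by exactly one step at $p$. Equivalently, the class records which of $E$, $E'$ has the smaller endomorphism ring at $p$. What you actually need is:
\begin{enumerate}
\item complex conjugation preserves this type, because $\bar\varphi(a)=\varphi(\bar a)$ has the same image as $\varphi$;
\item $w_{N^*(R)}$ preserves it, because $p\nmid N^*(R)$;
\item $\Gal(H_R/K)$ preserves every local type, which you use implicitly and should state.
\end{enumerate}
Note also that $w_p$ itself swaps this type, while all of $\Gal_\Q$ preserves it. So for such $p$ the involution $w_p$ is neither Galois nor twisted conjugation. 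This is precisely why the statement uses $N^*(R)$ rather than $N(R)$, and your write-up never engages with it.

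You also misquote \cite[Prop.~5.6]{GRgenus1}: it is the product $W(R)\times\Gal(H_R/K)$ that acts simply transitively, not each factor. Combined with the fact that $W(R)$ acts simply transitively on the $2^{\omega(N(R))}$ tuples of local types, this makes each type-fibre a $\Gal(H_R/K)$-torsor, and that is what closes your argument.

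\textbf{The conclusion must be read pointwise.} The torsor argument gives, for each $P$, some $\sigma_P$ with $w_{N^*(R)}(P)=\sigma_P(\bar P)$, and the dependence on $P$ cannot be removed. Since $w_{N^*(R)}$ is defined over $\Q$ and $c\tau c=\tau^{-1}$ on $H_R$, a single $\sigma$ valid for all $P$ would force every $\tau^2$ with $\tau\in\Gal(H_R/K)$ to fix all of $\CM(R)$. By freeness that means $\tau^2=1$, which fails for $\Pic(R)\cong\Z/4\Z$ in Example~\ref{ex: N = 286 D = -39}. So your sentence ``$w_{N^*(R)}\circ c$ acts by some $\sigma$'' is only correct pointwise.

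\textbf{Two simplifications.} Your anticipated obstacle disappears: the torsor argument never needs $c\sigma_{\mathfrak a}c=\sigma_{\bar{\mathfrak a}}$. In the first part no strong approximation is needed, since $\calO\varphi(\mathfrak p_m)$ and $\calO\omega_m$ have the same completions at every prime and hence coincide.

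\textbf{Comparison with citing \cite{GRgenus1}.} Your route is self-contained modulo Shimura reciprocity and \cite[Prop.~5.6]{GRgenus1}. However, it only proves that $\sigma_P$ exists without identifying it. An explicit $\sigma_P$ is what the exact field-of-definition computations mentioned just before the proposition require.
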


\begin{example}\label{ex: N = 286 D = -39}
Let $N = 286 = 2 \cdot 11 \cdot 13$ and let $R$ be the maximal order of $K = \Q(\sqrt{-39})$. 
We have $\Pic(R) \cong \Z/4 \Z$, so $h_R = 4$ and $[H_R: \Q] = 8$.
The primes $2$ and $11$ split in $K$ while $13$ ramifies, so $N(R) = 22 = N^*(R)$. 
Furthermore, $w_{13}$ acts by a Galois element in $\Gal(H_R/K)$, and $w_{22}$ acts by twisted complex conjugation.
The fixed field of  $H_R$ under $\langle w_{13}, w_{22} \rangle $ is quadratic, and can be computed to be $\Q(\sqrt{13})$. 
We have $\#\CM(R) = 16$ and thus $\#\overline{\CM}(R) =2$.
\end{example}

\subsection{Matching points and elliptic points}
Fix a basis $f_1,\dots, f_g$ for $S_2(\Gamma_0^*(N))$.
The canonical map (with respect to this basis)  for $X_0^*(N)$ is the map
\[ \tau \mapsto [f_1(\tau): \dots : f_g(\tau)]\]
where $q = e^{2 \pi i \tau}$.
One can read more about constructing these canonical models in \cite{GalbraithPhD}.

Given a quadratic order $R =\Z[\gamma]$, a point $\tau \in \calH$ satisfies $\tau \in \CM(R)$ if and only if there is an optimal embedding $\varphi : R \hookrightarrow \calO$, such that $\varphi(\gamma) \tau = \tau$. We may therefore enumerate the conjugacy classes of these optimal embeddings to enumerate $
\CM(R)$. Given $\tau \in \CM(R)$, whose stabilizer in $\Gamma_0^*(N)/\{ \pm 1\}$ is of order $h$, we compute its image as 
$$
\tau \mapsto \left[ \vartheta^{(h-1)}f_1(q): \ldots : \vartheta^{(h-1)}f_g(q)  \right],
$$
where $\vartheta = q \frac{d}{dq}$. Indeed, the uniformizer at $\tau$ is of the form $t = w^h$, and $\omega_f = \phi(t) \ dt$ with $dt = hw^{h-1} \ dw$. Thus, $f$ vanishes to degree $h-1$ at $\tau$ and differentiation $h-1$ times retrieves the values up to a scaling factor. 

In our code, matching is  done numerically to a high degree of precision, but not rigorously. To certify the matches, one could use Schofer's formula to obtain norms of values of Borcherds forms at CM points as in \cite{Err11, GY17}, or use the $j$-map from $X_0(N) \to X(1)$. 
Both approaches have significant computational costs as the level increases.

\section{Explaining exceptional points}
\label{sec:explanation}

In this section, we study the known exceptional points on star curves $X = X_0^*(N)$ of squarefree level and genus $g \geq 2$. In \cite{Genus2ANTS}, it is proven that there are exactly 51 exceptional points in genus 2. In the previous section, we found ten exceptional points in genus 3, four exceptional points in genus 4, and zero exceptional points in higher genus. Altogether, there are 65 known exceptional points. 

We describe three simple geometric constructions ( ``automorphisms'', ``collinearity'', and ``elliptic covers'') that will ultimately  ``explain'' all 65 points. All three constructions start with a natural algebraic family of effective divisors $\{D_y\}_{y \in Y}$ on $X$, indexed by a variety $Y$ over $\Q$. If for some $y \in Y(\Q)$ we have $D_y = x + D'$, where $x \in X$ is degree $1$ and $D'$ is supported on special closed points (cusps or CM points), then $x$ is a rational point and we declare it ``explained''. 
The choice of $Y$ in the three constructions is 
\[
Y = 
\begin{cases}
    X \simeq \Gamma_\alpha & \mbox{ automorphism } \alpha \colon X \to X\\
    \P^{g-1} = \P H^0(X, \omega_X) & \mbox{ collinearity}\\
    E & \mbox{ elliptic cover } X \to E.
\end{cases}
\]
We spell this out in more detail below.

\subsection{Automorphisms}\label{subsec: automorphisms}
\begin{definition}
    A point $x \in X_0^*(N)(\Q)$ is {\em explained by automorphism} if there exists $\alpha \in \Aut(X_0^*(N))$ and a special point $y \in X_0^*(N)(\Q)$ such that $\alpha(y) = x$. 
\end{definition} 

Here, the family of divisors is the degree 2 family $\{x+\alpha(x)\}_{x \in X}$ indexed by $X$ itself.

This notion of ``explanation'' goes back to Ogg, who observed that the non-cuspidal points on the genus two curve $X_0(37)$ are explained by the hyperelliptic involution. 

More recently, \cite{Genus2ANTS} observed that $49$ of the $51$ exceptional points on genus two curves $X_0^*(N)$ are explained by automorphism, the exceptions being a pair of exceptional points on $X_0^*(286)$. Of these, $47$ are explained by the hyperelliptic involution, while two points on $X_0^*(129)$ are explained by bielliptic involutions.

In genus $3$, we find that 8 out of the 10 exceptional points are explained by a bielliptic involutions, and the remaining 2 have no automorphisms (see Table  \ref{tab:exceptional_points_genus_3} in the ``Aut.'' column).  
In genus $4$, only 1 out of the 4 exceptional points are explained by an automorphism, which is also a bielliptic involution (see Table \ref{tab:exceptional_points_genus_4}  in the ``Aut.'' column). 
Bars and Gonz\'alez \cite{BarsGonzalezAut} computed automorphism groups of $X_0^*(N)$ for squarefree $N$ and found all cases when they are nontrivial.
In each case, we also  checked that these nontrivial automorphisms did not produce more exceptional points than the ones we already found.
Thus, automorphisms explain 9 out of the 14 known exceptional points in genus $g \geq 3$.

\begin{remark}
    One might wonder why there are so many bielliptic genus $3$ and $4$ curves $X_0^*(N)$ to begin with. For example, 11 out of the $31$ genus 3 curves $X_0^*(N)$ are bielliptic \cite{BarsGonzalezRovira}. 
   Proposition  \ref{prop: elliptic cover info and degree} below explains this phenomenon. 
\end{remark}

\subsection{Collinearity}\label{subsec: collinearity}

Galbraith observed in \cite{Galbraithpplus} that the genus 4 curves $X_0^*(137)$ and $X_0^*(311)$ each have an exceptional point. Their automorphism groups are trivial, so the previous construction does not apply.

Instead, we use ``collinearity'', as introduced by Derickx--Hashimoto--Najman--Shnidman in their ``explanation'' of all rational points on all modular curves \cite{DHNS}.  Let $K_X$ denote a canonical divisor on $X = X_0^*(N)$, and let $\equiv$ denote linear equivalence.

\begin{definition}
    A point $x \in X_0^*(N)(\Q)$ is {\em explained by collinearity} if there exists special closed points $x_1,\ldots, x_k \in X_0^*(N)$ such that $x + \sum_{i = 1}^k x_i \equiv K_X$.
\end{definition}

Nothing in the definition requires the special divisor to be unique, and in
practice it is not: most of the exceptional points below lie on several
special hyperplanes at once (Table~\ref{tab:all_planes}).

Here, the family of divisors is the canonical linear system $|K_X| = \P H^0(X, \omega_X) \simeq \P^{g-1}$ consisting of divisors linearly equivalent to $K_X$. This has degree $2g-2$.

\begin{example}\label{rem: nonhyp vs hyp}
    If $g = 2$, then $\deg(K_X) = 2$ and we see immediately that a point is explained by collinearity if and only if  it is explained by the hyperelliptic involution.  
\end{example}

\begin{example}\label{ex: g large}
    Suppose $X$ is non-hyperelliptic, so that it embeds in $\P^{g-1} = |K_X|$.  If $g = 3$, the definition reduces to the following: if $P,Q,R$ are special points on the quartic plane curve $X \subset \P_\Q^2$ that all happen to lie on a line $L \subset \P_\Q^2$, then the fourth point on $L \cap X$ is rational and is explained. Hence the name ``collinearity''. Two points in $\P^2$ determine a unique line, so collinearity amounts to the single ``coincidence'' that $R$ lies on the line $PQ$.  In general,  collinearity requires a  collection of $2g-3$ special points to lie in a common $\Q$-hyperplane.
    Since $g-1$ points in general position determine a unique hyperplane, this amounts to $g-2$ ``coincidences''. The final $(2g-2)$-th point may itself be special, but this is not forced. 
\end{example}

\begin{remark}\label{rem: collinearity as automorphism}
    Example \ref{rem: nonhyp vs hyp} seems special to genus $2$,  but collinearity is closely related to automorphisms in  {\it all} genera.  By Riemann--Roch,  the formula $D \mapsto K_X - D$ defines a birational involution of the image of $\Sym^{g-1}(X)$ in $\Pic^{g-1}_X$. Thus, in the context of exceptional points on $\Sym^{g-1}(X)$, ``collinearity'' is a special case of ``automorphism''.  (By Faltings' theorem, there is a proper closed subvariety $Z \subset \Sym^{g-1}(X)$ such that there are  only finitely many $K$-points on $\Sym^{g-1}(X) \setminus Z$, for every number field $K$, and it makes sense to call the non-special points in $(\Sym^{g-1}(X) \setminus Z)(\Q)$ exceptional.) 
\end{remark}

\begin{remark}
    By the theory of complex multiplication, there are only finitely many special points $x_i \in X_0^*(N)$ of degree at most $2g-3$, so the points explained by collinearity are in principle computable. 
\end{remark}

    Using the algorithms developed in the previous section, we can verify (to a high degree of numerical precision) that all known genus 3 and 4 exceptional points are explained by collinearity. See Tables \ref{tab:exceptional_points_genus_3} and \ref{tab:exceptional_points_genus_4} for the CM points appearing in the collinearity divisors.

In particular, both of Galbraith's exceptional points are explained by collinearity. Because of their historical interest, we certified this rigorously using the $j$-map $X_0(N) \to X(1)$.
\begin{proposition}
The exceptional point on $X_0^*(137)$ is explained by collinearity with the CM points of discriminants $-32, -11, -4$, and the cusp.

The exceptional point on $X_0^*(311)$ is explained by collinearity with the CM points of discriminants $-232$, $-123$, and $-19$. 
\end{proposition}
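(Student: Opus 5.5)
We will verify the collinearity relation by exhibiting an explicit rational hyperplane in the canonical embedding and checking, rigorously, which points it cuts out. Since $X_0^*(137)$ and $X_0^*(311)$ are non-hyperelliptic of genus $4$, the canonical model $X\subset\P^3$ built from the Hermite-normal-form basis $f_1,\dots,f_4$ is a degree $6$ curve. A hyperplane section is then a canonical divisor of degree $2g-2=6$. So it suffices to produce a plane $H\subset\P^3_\Q$ with
\[ H\cdot X = x + D', \]
where $x$ is the exceptional point and $D'$ is supported on the listed special points. The degrees have to match. For $137$, the degree-one pieces are the cusp and one CM point, and two CM points of degree $2$ give $1+1+2+2=6$ together with $x$. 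For $311$, the three discriminants give degrees summing to $5$. We will compute these degrees, and the fact that these classes are defined over $\Q$ or a quadratic field, from Proposition~\ref{prop:degreeexact}: compute $h(R)$, $N(R)$ and $\varepsilon(N,R)$ for each discriminant.

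The first step is to write down the points exactly. The exceptional point $x$ and the cusp have rational coordinates: the cusp comes from the leading $q$-expansion coefficients, and $x$ comes from the point search. For the CM points we pick explicit $\tau$ via optimal embeddings into the Eichler order. We then evaluate $[\vartheta^{(h-1)}f_i(\tau)]$ numerically to high precision and recognize the coordinates as algebraic numbers in the appropriate field ($\Q$ or a quadratic field, via LLL), checking that they satisfy the canonical equations exactly. Next, we solve for the plane through $x$, the cusp (or the rational CM points), and a conjugate pair of quadratic points; this plane is automatically Galois-stable, hence defined over $\Q$. Finally, we intersect it with $X$ exactly (by resultants or a Gröbner basis) and factor the resulting degree-$6$ zero-dimensional scheme to confirm the full divisor. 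If some of the intersection is not transverse, we account for the multiplicity.

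The main obstacle is certification: the points we recognized from numerical $q$-expansion values must genuinely be the CM points of the stated discriminants, not merely algebraic points that happen to match to high precision. To settle this we will use the $j$-map, as indicated. We express $j$ and $j\circ w_N$ (or the symmetric functions of $j\circ w_m$ over $W(N)$, which are $W(N)$-invariant) as rational functions on the canonical model, using $q$-expansion identities verified past the Sturm bound. We then evaluate these at the algebraic points and check that the resulting values are exactly the singular moduli (roots of the Hilbert class polynomials $H_{-32}$, $H_{-11}$, $H_{-4}$, resp. $H_{-232}$, $H_{-123}$, $H_{-19}$). In the same way we check that $x$ is not CM: its $j$-values are not integral singular moduli of the bounded degree allowed by Corollary~\ref{prop:bdclassno}. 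This certifies the divisor identity $x+D'\equiv K_X$ and hence the claim.
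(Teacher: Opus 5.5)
Your plan is essentially the paper's approach. The paper exhibits an explicit rational plane in the canonical model and certifies the identification of the CM points rigorously via the $j$-map $X_0(N)\to X(1)$, as you propose. For $137$ this is the plane $x+2z+2w-y=0$ in the coordinates of Figure~\ref{fig:137}: it passes through the cusp and the rational CM points of discriminants $-11$ and $-4$, and its residual intersection is the conjugate pair of discriminant $-32$ together with Galbraith's point.

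One slip needs fixing: your degree bookkeeping for $137$ is wrong as written. By Proposition~\ref{prop:degreeexact}, $h(-4)=h(-11)=1$ and $137$ splits in both fields, so these give two \emph{rational} CM points. Meanwhile $h(-32)=2$ gives a single conjugate pair. The hyperplane section is therefore $1\,(x)+1\,(\text{cusp})+1\,(-11)+1\,(-4)+2\,(-32)=6$. Your allocation (the cusp, one rational CM point, and two degree-$2$ CM points) would total $7$ once $x$ is added, which cannot happen on a single hyperplane section.

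Also note that $j$ itself does not descend to $X_0^*(N)$. Only its $W(N)$-symmetric functions, such as $j+j\circ w_N$ and $j\cdot(j\circ w_N)$, do, so your parenthetical is the version that actually works.
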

\begin{remark}
The collinearity relations in Tables \ref{tab:exceptional_points_genus_3} and \ref{tab:exceptional_points_genus_4} are generally not unique. For example at level $N = 137$, the exceptional point is also coplanar with the CM points corresponding to
$\{-112, -19, -11, -7\} $
$\{ -427, -19, -4 \}$, 
$\{ -72, -7, -4 \}$, and
$\{ -112, -8, \text{cusp} \}$.
The latter three planes have tangencies at the  points $-19$, $-7$ and the cusp, respectively.
We rigorously certified all of these collinearity relations on $X_0^*(137)$.
Table~\ref{tab:all_planes} lists every special hyperplane we can confirm
through each exceptional point in genus $3$ and $4$: there are $34$ of them
across the $14$ points, and $10$ of the $14$ levels carry more than one.
We emphasize that this multiplicity is a statement about the curves and not about the search.  
\end{remark}

\begin{figure}
\begin{tikzpicture}[line cap=round, line join=round,
    curve/.style={black, line width=0.9pt},
    behind/.style={black!50, line width=0.6pt, dash pattern=on 2.2pt off 2.4pt},
    ratpt/.style={circle, fill=black, minimum size=3.4pt, inner sep=0pt},
    quadpt/.style={circle, draw=black, fill=white, line width=0.7pt, minimum size=4pt, inner sep=0pt},
    cusppt/.style={diamond, fill=black, inner sep=1.4pt},
    excpt/.style={star, star points=5, star point ratio=0.4, draw=black, fill=black, minimum size=4pt, inner sep=0pt},
    lbl/.style={font=\footnotesize, align=center}]
\begin{scope}
\clip (-5.32,-3.45) rectangle (3.90,4.25);
\draw[behind] plot coordinates {(-1.52,-2.13) (-1.50,-2.12) (-1.49,-2.12) (-1.47,-2.11) (-1.45,-2.10) (-1.44,-2.10) (-1.42,-2.09) (-1.40,-2.09) (-1.39,-2.08) (-1.37,-2.08) (-1.35,-2.07) (-1.34,-2.06) (-1.32,-2.06) (-1.30,-2.05) (-1.28,-2.05) (-1.27,-2.04) (-1.25,-2.04) (-1.23,-2.03) (-1.22,-2.02) (-1.20,-2.02) (-1.18,-2.01) (-1.17,-2.01) (-1.15,-2.00) (-1.13,-2.00) (-1.12,-1.99) (-1.10,-1.99) (-1.08,-1.98) (-1.07,-1.97) (-1.05,-1.97) (-1.03,-1.96) (-1.02,-1.96) (-1.00,-1.95) (-0.98,-1.95) (-0.96,-1.94) (-0.95,-1.94) (-0.93,-1.93) (-0.91,-1.93) (-0.90,-1.92) (-0.88,-1.92) (-0.86,-1.91) (-0.85,-1.91) (-0.83,-1.90) (-0.81,-1.90) (-0.80,-1.89) (-0.78,-1.89) (-0.76,-1.88) (-0.75,-1.88) (-0.73,-1.87) (-0.71,-1.87) (-0.70,-1.86) (-0.68,-1.86) (-0.66,-1.85) (-0.65,-1.85) (-0.63,-1.84) (-0.61,-1.84) (-0.60,-1.83) (-0.58,-1.83) (-0.56,-1.82) (-0.55,-1.82) (-0.53,-1.82) (-0.51,-1.81) (-0.50,-1.81) (-0.48,-1.80) (-0.46,-1.80) (-0.45,-1.80) (-0.43,-1.79)};
\draw[behind] plot coordinates {(0.69,-0.75) (0.71,-0.73) (0.73,-0.71) (0.76,-0.70) (0.78,-0.68) (0.80,-0.67) (0.83,-0.65) (0.85,-0.64) (0.87,-0.63) (0.90,-0.63) (0.92,-0.62) (0.95,-0.62) (0.98,-0.62) (1.00,-0.62) (1.03,-0.62) (1.06,-0.62) (1.08,-0.63) (1.11,-0.64) (1.14,-0.65) (1.16,-0.65) (1.19,-0.66) (1.21,-0.67) (1.24,-0.69) (1.27,-0.70) (1.29,-0.71) (1.32,-0.72) (1.34,-0.74) (1.37,-0.75) (1.39,-0.76) (1.42,-0.78) (1.44,-0.79) (1.47,-0.81) (1.49,-0.82) (1.51,-0.84) (1.54,-0.85) (1.56,-0.87) (1.59,-0.89) (1.61,-0.90) (1.63,-0.92) (1.66,-0.93) (1.68,-0.95) (1.70,-0.97) (1.73,-0.98) (1.75,-1.00) (1.78,-1.01) (1.80,-1.03) (1.82,-1.05) (1.85,-1.06) (1.87,-1.08) (1.89,-1.10) (1.91,-1.12) (1.94,-1.13) (1.96,-1.15) (1.98,-1.17) (2.01,-1.18) (2.03,-1.20) (2.05,-1.22) (2.08,-1.24) (2.10,-1.25) (2.12,-1.27) (2.14,-1.29) (2.17,-1.31) (2.19,-1.32) (2.21,-1.34) (2.24,-1.36) (2.26,-1.38) (2.28,-1.39) (2.30,-1.41) (2.33,-1.43) (2.35,-1.45) (2.37,-1.46) (2.39,-1.48) (2.42,-1.50) (2.44,-1.52) (2.46,-1.54) (2.48,-1.55) (2.51,-1.57) (2.53,-1.59) (2.55,-1.61) (2.57,-1.62) (2.60,-1.64) (2.62,-1.66) (2.64,-1.68) (2.66,-1.70) (2.69,-1.71) (2.71,-1.73) (2.73,-1.75) (2.75,-1.77) (2.78,-1.79) (2.80,-1.80) (2.82,-1.82) (2.84,-1.84) (2.87,-1.86) (2.89,-1.88) (2.91,-1.90) (2.93,-1.91) (2.96,-1.93) (2.98,-1.95) (3.00,-1.97) (3.02,-1.99) (3.05,-2.00) (3.07,-2.02) (3.09,-2.04) (3.11,-2.06) (3.13,-2.08) (3.16,-2.09) (3.18,-2.11) (3.20,-2.13) (3.22,-2.15) (3.25,-2.17) (3.27,-2.19) (3.29,-2.20) (3.31,-2.22)};
\draw[behind] plot coordinates {(-3.05,3.12) (-3.03,3.10) (-3.01,3.08) (-2.98,3.06) (-2.96,3.04) (-2.94,3.02) (-2.92,3.00) (-2.90,2.99) (-2.87,2.97) (-2.85,2.95) (-2.83,2.93) (-2.81,2.91) (-2.79,2.89) (-2.76,2.87) (-2.74,2.86) (-2.72,2.84) (-2.70,2.82) (-2.67,2.80) (-2.65,2.78) (-2.63,2.76) (-2.61,2.74) (-2.59,2.73) (-2.56,2.71) (-2.54,2.69) (-2.52,2.67) (-2.50,2.65) (-2.48,2.63) (-2.45,2.61) (-2.43,2.60) (-2.41,2.58) (-2.39,2.56) (-2.37,2.54) (-2.34,2.52) (-2.32,2.50) (-2.30,2.48) (-2.28,2.47) (-2.26,2.45) (-2.23,2.43) (-2.21,2.41) (-2.19,2.39) (-2.17,2.37) (-2.15,2.35) (-2.12,2.34) (-2.10,2.32) (-2.08,2.30) (-2.06,2.28) (-2.04,2.26) (-2.02,2.24) (-1.99,2.22) (-1.97,2.20) (-1.95,2.19) (-1.93,2.17) (-1.91,2.15) (-1.88,2.13) (-1.86,2.11) (-1.84,2.09) (-1.82,2.07) (-1.80,2.06) (-1.77,2.04) (-1.75,2.02) (-1.73,2.00) (-1.71,1.98) (-1.69,1.96) (-1.66,1.94) (-1.64,1.92) (-1.62,1.91) (-1.60,1.89) (-1.58,1.87) (-1.56,1.85) (-1.53,1.83) (-1.51,1.81) (-1.49,1.79) (-1.47,1.77) (-1.45,1.76) (-1.42,1.74) (-1.40,1.72) (-1.38,1.70) (-1.36,1.68) (-1.34,1.66) (-1.32,1.64) (-1.29,1.62) (-1.27,1.60) (-1.25,1.59) (-1.23,1.57) (-1.21,1.55) (-1.19,1.53) (-1.16,1.51) (-1.14,1.49) (-1.12,1.47) (-1.10,1.45) (-1.08,1.43) (-1.06,1.41) (-1.03,1.40) (-1.01,1.38) (-0.99,1.36) (-0.97,1.34) (-0.95,1.32) (-0.93,1.30) (-0.90,1.28) (-0.88,1.26) (-0.86,1.24) (-0.84,1.22) (-0.82,1.20) (-0.80,1.18) (-0.78,1.17) (-0.76,1.15) (-0.73,1.13) (-0.71,1.11) (-0.69,1.09) (-0.67,1.07) (-0.65,1.05) (-0.63,1.03) (-0.61,1.01) (-0.59,0.99) (-0.57,0.97) (-0.54,0.95) (-0.52,0.93) (-0.50,0.91) (-0.48,0.89) (-0.46,0.87) (-0.44,0.85) (-0.42,0.83) (-0.40,0.81) (-0.38,0.79) (-0.36,0.77) (-0.34,0.75) (-0.32,0.73) (-0.30,0.71) (-0.28,0.69) (-0.26,0.67) (-0.24,0.64) (-0.22,0.62) (-0.20,0.60) (-0.18,0.58) (-0.16,0.56) (-0.14,0.54) (-0.12,0.52) (-0.10,0.50) (-0.08,0.47) (-0.07,0.45) (-0.05,0.43) (-0.03,0.41) (-0.01,0.39) (0.01,0.36) (0.03,0.34) (0.04,0.32) (0.06,0.29) (0.08,0.27) (0.10,0.25) (0.11,0.23) (0.13,0.20) (0.15,0.18) (0.17,0.16) (0.18,0.13) (0.20,0.11) (0.22,0.08)};
\draw[behind] plot coordinates {(0.48,-0.21) (0.51,-0.23) (0.54,-0.24) (0.56,-0.25) (0.59,-0.26) (0.62,-0.28) (0.64,-0.29) (0.67,-0.30) (0.70,-0.31) (0.72,-0.32) (0.75,-0.33) (0.78,-0.34) (0.80,-0.35) (0.83,-0.36) (0.86,-0.37) (0.88,-0.37) (0.91,-0.38) (0.94,-0.39) (0.96,-0.40) (0.99,-0.41) (1.01,-0.42) (1.04,-0.43) (1.06,-0.43) (1.09,-0.44) (1.11,-0.45) (1.14,-0.45) (1.16,-0.46) (1.18,-0.47) (1.21,-0.47) (1.23,-0.48) (1.25,-0.49) (1.27,-0.49) (1.30,-0.50) (1.32,-0.50) (1.34,-0.50) (1.36,-0.51) (1.38,-0.51) (1.40,-0.52) (1.42,-0.52) (1.44,-0.52) (1.46,-0.52) (1.48,-0.53) (1.50,-0.53) (1.52,-0.53) (1.54,-0.53) (1.56,-0.53) (1.58,-0.53) (1.60,-0.53) (1.62,-0.54) (1.64,-0.54) (1.66,-0.54) (1.67,-0.53) (1.69,-0.53) (1.71,-0.53) (1.73,-0.53) (1.75,-0.53) (1.77,-0.53) (1.79,-0.53) (1.80,-0.53) (1.82,-0.52) (1.84,-0.52) (1.86,-0.52) (1.88,-0.52) (1.89,-0.52) (1.91,-0.51) (1.93,-0.51) (1.95,-0.51) (1.96,-0.50) (1.98,-0.50) (2.00,-0.50) (2.02,-0.50) (2.03,-0.49) (2.05,-0.49) (2.07,-0.48) (2.09,-0.48) (2.10,-0.48) (2.12,-0.47) (2.14,-0.47) (2.16,-0.47)};
\draw[black!55, line width=0.6pt] (-4.24,4.82) -- (3.75,0.07) -- (2.83,-4.72) -- (-5.17,0.03) -- cycle;
\draw[curve] plot coordinates {(-4.65,-3.28) (-4.64,-3.27) (-4.62,-3.27) (-4.60,-3.26) (-4.59,-3.25) (-4.57,-3.25) (-4.55,-3.24) (-4.54,-3.23) (-4.52,-3.23) (-4.50,-3.22) (-4.48,-3.22) (-4.47,-3.21) (-4.45,-3.20) (-4.43,-3.20) (-4.42,-3.19) (-4.40,-3.18) (-4.38,-3.18) (-4.37,-3.17) (-4.35,-3.16) (-4.33,-3.16) (-4.32,-3.15) (-4.30,-3.15) (-4.28,-3.14) (-4.26,-3.13) (-4.25,-3.13) (-4.23,-3.12) (-4.21,-3.11) (-4.20,-3.11) (-4.18,-3.10) (-4.16,-3.09) (-4.15,-3.09) (-4.13,-3.08) (-4.11,-3.08) (-4.09,-3.07) (-4.08,-3.06) (-4.06,-3.06) (-4.04,-3.05) (-4.03,-3.04) (-4.01,-3.04) (-3.99,-3.03) (-3.98,-3.03) (-3.96,-3.02) (-3.94,-3.01) (-3.93,-3.01) (-3.91,-3.00) (-3.89,-2.99) (-3.87,-2.99) (-3.86,-2.98) (-3.84,-2.97) (-3.82,-2.97) (-3.81,-2.96) (-3.79,-2.96) (-3.77,-2.95) (-3.76,-2.94) (-3.74,-2.94) (-3.72,-2.93) (-3.71,-2.92) (-3.69,-2.92) (-3.67,-2.91) (-3.65,-2.90) (-3.64,-2.90) (-3.62,-2.89) (-3.60,-2.89) (-3.59,-2.88) (-3.57,-2.87) (-3.55,-2.87) (-3.54,-2.86) (-3.52,-2.85) (-3.50,-2.85) (-3.48,-2.84) (-3.47,-2.84) (-3.45,-2.83) (-3.43,-2.82) (-3.42,-2.82) (-3.40,-2.81) (-3.38,-2.80) (-3.37,-2.80) (-3.35,-2.79) (-3.33,-2.78) (-3.32,-2.78) (-3.30,-2.77) (-3.28,-2.77) (-3.26,-2.76) (-3.25,-2.75) (-3.23,-2.75) (-3.21,-2.74) (-3.20,-2.73) (-3.18,-2.73) (-3.16,-2.72) (-3.15,-2.72) (-3.13,-2.71) (-3.11,-2.70) (-3.09,-2.70) (-3.08,-2.69) (-3.06,-2.68) (-3.04,-2.68) (-3.03,-2.67) (-3.01,-2.67) (-2.99,-2.66) (-2.98,-2.65) (-2.96,-2.65) (-2.94,-2.64) (-2.93,-2.63) (-2.91,-2.63) (-2.89,-2.62) (-2.87,-2.62) (-2.86,-2.61) (-2.84,-2.60) (-2.82,-2.60) (-2.81,-2.59) (-2.79,-2.58) (-2.77,-2.58) (-2.76,-2.57) (-2.74,-2.57) (-2.72,-2.56) (-2.71,-2.55) (-2.69,-2.55) (-2.67,-2.54) (-2.65,-2.53) (-2.64,-2.53) (-2.62,-2.52) (-2.60,-2.52) (-2.59,-2.51) (-2.57,-2.50) (-2.55,-2.50) (-2.54,-2.49) (-2.52,-2.48) (-2.50,-2.48) (-2.49,-2.47) (-2.47,-2.47) (-2.45,-2.46) (-2.43,-2.45) (-2.42,-2.45) (-2.40,-2.44) (-2.38,-2.44) (-2.37,-2.43) (-2.35,-2.42) (-2.33,-2.42) (-2.32,-2.41) (-2.30,-2.40) (-2.28,-2.40) (-2.27,-2.39) (-2.25,-2.39) (-2.23,-2.38) (-2.21,-2.37) (-2.20,-2.37) (-2.18,-2.36) (-2.16,-2.36) (-2.15,-2.35) (-2.13,-2.34) (-2.11,-2.34) (-2.10,-2.33) (-2.08,-2.33) (-2.06,-2.32) (-2.05,-2.31) (-2.03,-2.31) (-2.01,-2.30) (-1.99,-2.30) (-1.98,-2.29) (-1.96,-2.28) (-1.94,-2.28) (-1.93,-2.27) (-1.91,-2.26) (-1.89,-2.26) (-1.88,-2.25) (-1.86,-2.25) (-1.84,-2.24) (-1.83,-2.23) (-1.81,-2.23) (-1.79,-2.22) (-1.77,-2.22) (-1.76,-2.21) (-1.74,-2.20) (-1.72,-2.20) (-1.71,-2.19) (-1.69,-2.19) (-1.67,-2.18) (-1.66,-2.18) (-1.64,-2.17) (-1.62,-2.16) (-1.61,-2.16) (-1.59,-2.15) (-1.57,-2.15) (-1.55,-2.14) (-1.54,-2.13) (-1.52,-2.13)};
\draw[curve] plot coordinates {(-0.43,-1.79) (-0.41,-1.79) (-0.40,-1.79) (-0.38,-1.78) (-0.36,-1.78) (-0.35,-1.78) (-0.33,-1.77) (-0.31,-1.77) (-0.30,-1.77) (-0.28,-1.77) (-0.26,-1.77) (-0.25,-1.76) (-0.23,-1.76) (-0.22,-1.76) (-0.20,-1.76) (-0.18,-1.76) (-0.17,-1.76) (-0.15,-1.76) (-0.13,-1.76) (-0.12,-1.76) (-0.10,-1.76) (-0.09,-1.77) (-0.07,-1.77) (-0.06,-1.77) (-0.04,-1.78) (-0.03,-1.78) (-0.01,-1.79) (0.00,-1.80) (0.02,-1.81) (0.03,-1.82) (0.04,-1.83) (0.05,-1.85) (0.07,-1.87) (0.08,-1.88) (0.09,-1.90) (0.09,-1.92) (0.10,-1.94) (0.11,-1.97) (0.12,-1.99) (0.12,-2.01) (0.13,-2.04) (0.13,-2.06) (0.14,-2.09) (0.15,-2.11) (0.15,-2.13) (0.16,-2.16) (0.16,-2.18) (0.17,-2.20) (0.17,-2.22) (0.18,-2.24) (0.19,-2.25) (0.20,-2.27) (0.20,-2.28) (0.21,-2.29) (0.22,-2.29) (0.23,-2.29) (0.23,-2.29) (0.24,-2.28) (0.25,-2.27) (0.26,-2.26) (0.26,-2.25) (0.27,-2.24) (0.27,-2.22) (0.28,-2.20) (0.28,-2.19) (0.29,-2.17) (0.29,-2.15) (0.30,-2.13) (0.30,-2.11) (0.30,-2.09) (0.30,-2.06) (0.31,-2.04) (0.31,-2.02) (0.31,-2.00) (0.31,-1.97) (0.31,-1.95) (0.31,-1.93) (0.31,-1.90) (0.31,-1.88) (0.31,-1.85) (0.31,-1.83) (0.31,-1.81) (0.31,-1.78) (0.31,-1.76) (0.31,-1.73) (0.30,-1.71) (0.30,-1.68) (0.30,-1.65) (0.29,-1.63) (0.29,-1.60) (0.28,-1.58) (0.28,-1.55) (0.27,-1.53) (0.26,-1.50) (0.26,-1.48) (0.25,-1.45) (0.24,-1.43) (0.23,-1.40) (0.22,-1.38) (0.21,-1.35) (0.20,-1.33) (0.19,-1.30) (0.18,-1.28) (0.17,-1.25) (0.15,-1.23) (0.14,-1.20) (0.13,-1.18) (0.12,-1.15) (0.10,-1.13) (0.09,-1.10) (0.08,-1.08) (0.06,-1.05) (0.05,-1.03) (0.03,-1.00) (0.02,-0.98) (0.01,-0.95) (-0.00,-0.93) (-0.02,-0.90) (-0.03,-0.88) (-0.04,-0.85) (-0.05,-0.82) (-0.06,-0.80) (-0.07,-0.77) (-0.08,-0.74) (-0.09,-0.72) (-0.10,-0.69) (-0.10,-0.66) (-0.11,-0.63) (-0.11,-0.60) (-0.11,-0.57) (-0.11,-0.54) (-0.11,-0.52) (-0.10,-0.49) (-0.09,-0.46) (-0.08,-0.43) (-0.06,-0.41) (-0.05,-0.39) (-0.02,-0.37) (-0.00,-0.35) (0.02,-0.34) (0.05,-0.33) (0.08,-0.32) (0.11,-0.32) (0.13,-0.32) (0.16,-0.32) (0.19,-0.32) (0.22,-0.32) (0.25,-0.33) (0.27,-0.34) (0.30,-0.35) (0.32,-0.37) (0.34,-0.39) (0.36,-0.41) (0.38,-0.43) (0.40,-0.45) (0.41,-0.48) (0.43,-0.50) (0.44,-0.53) (0.46,-0.55) (0.47,-0.57) (0.48,-0.60) (0.50,-0.62) (0.51,-0.65) (0.52,-0.67) (0.54,-0.69) (0.55,-0.71) (0.56,-0.73) (0.58,-0.75) (0.59,-0.77) (0.61,-0.78) (0.63,-0.78) (0.65,-0.78) (0.67,-0.77) (0.69,-0.75)};
\draw[curve] plot coordinates {(3.31,-2.22) (3.33,-2.24) (3.36,-2.26) (3.38,-2.28) (3.40,-2.30) (3.42,-2.31) (3.45,-2.33) (3.47,-2.35) (3.49,-2.37) (3.51,-2.39) (3.54,-2.41) (3.56,-2.42) (3.58,-2.44) (3.60,-2.46) (3.62,-2.48) (3.65,-2.50) (3.67,-2.52) (3.69,-2.53) (3.71,-2.55) (3.74,-2.57) (3.76,-2.59) (3.78,-2.61) (3.80,-2.63) (3.82,-2.64) (3.85,-2.66) (3.87,-2.68) (3.89,-2.70) (3.91,-2.72) (3.94,-2.74) (3.96,-2.75) (3.98,-2.77) (4.00,-2.79) (4.02,-2.81) (4.05,-2.83) (4.07,-2.85) (4.09,-2.86) (4.11,-2.88) (4.14,-2.90) (4.16,-2.92) (4.18,-2.94) (4.20,-2.96) (4.22,-2.97) (4.25,-2.99) (4.27,-3.01) (4.29,-3.03) (4.31,-3.05) (4.33,-3.07) (4.36,-3.08) (4.38,-3.10) (4.40,-3.12) (4.42,-3.14) (4.45,-3.16) (4.47,-3.18) (4.49,-3.20) (4.51,-3.21) (4.53,-3.23) (4.56,-3.25) (4.58,-3.27) (4.60,-3.29) (4.62,-3.31) (4.65,-3.32) (4.67,-3.34) (4.69,-3.36) (4.71,-3.38) (4.73,-3.40) (4.76,-3.42) (4.78,-3.43) (4.80,-3.45) (4.82,-3.47) (4.84,-3.49) (4.87,-3.51) (4.89,-3.53) (4.91,-3.55) (4.93,-3.56) (4.96,-3.58) (4.98,-3.60) (5.00,-3.62) (5.02,-3.64) (5.04,-3.66) (5.07,-3.67) (5.09,-3.69) (5.11,-3.71) (5.13,-3.73) (5.15,-3.75) (5.18,-3.77) (5.20,-3.78) (5.22,-3.80) (5.24,-3.82) (5.27,-3.84) (5.29,-3.86) (5.31,-3.88) (5.33,-3.90) (5.35,-3.91) (5.38,-3.93) (5.40,-3.95) (5.42,-3.97) (5.44,-3.99) (5.46,-4.01) (5.49,-4.02) (5.51,-4.04) (5.53,-4.06) (5.55,-4.08) (5.58,-4.10) (5.60,-4.12) (5.62,-4.14) (5.64,-4.15) (5.66,-4.17) (5.69,-4.19) (5.71,-4.21) (5.73,-4.23) (5.75,-4.25) (5.77,-4.26) (5.80,-4.28) (5.82,-4.30) (5.84,-4.32) (5.86,-4.34) (5.89,-4.36) (5.91,-4.38) (5.93,-4.39) (5.95,-4.41) (5.97,-4.43) (6.00,-4.45) (6.02,-4.47) (6.04,-4.49) (6.06,-4.50) (6.08,-4.52) (6.11,-4.54) (6.13,-4.56) (6.15,-4.58) (6.17,-4.60) (6.20,-4.62) (6.22,-4.63) (6.24,-4.65) (6.26,-4.67) (6.28,-4.69) (6.31,-4.71) (6.33,-4.73) (6.35,-4.74) (6.37,-4.76) (6.39,-4.78) (6.42,-4.80) (6.44,-4.82) (6.46,-4.84) (6.48,-4.86) (6.51,-4.87) (6.53,-4.89) (6.55,-4.91) (6.57,-4.93) (6.59,-4.95) (6.62,-4.97) (6.64,-4.98) (6.66,-5.00) (6.68,-5.02) (6.70,-5.04) (6.73,-5.06) (6.75,-5.08) (6.77,-5.10) (6.79,-5.11) (6.82,-5.13) (6.84,-5.15) (6.86,-5.17) (6.88,-5.19) (6.90,-5.21) (6.93,-5.23) (6.95,-5.24) (6.97,-5.26) (6.99,-5.28) (7.01,-5.30) (7.04,-5.32) (7.06,-5.34) (7.08,-5.35) (7.10,-5.37) (7.13,-5.39) (7.15,-5.41) (7.17,-5.43) (7.19,-5.45) (7.21,-5.47) (7.24,-5.48) (7.26,-5.50) (7.28,-5.52) (7.30,-5.54) (7.32,-5.56) (7.35,-5.58) (7.37,-5.59) (7.39,-5.61) (7.41,-5.63) (7.44,-5.65)};
\draw[curve] plot coordinates {(-6.89,6.34) (-6.87,6.32) (-6.85,6.30) (-6.83,6.28) (-6.81,6.27) (-6.78,6.25) (-6.76,6.23) (-6.74,6.21) (-6.72,6.19) (-6.70,6.17) (-6.67,6.15) (-6.65,6.14) (-6.63,6.12) (-6.61,6.10) (-6.58,6.08) (-6.56,6.06) (-6.54,6.04) (-6.52,6.02) (-6.50,6.01) (-6.47,5.99) (-6.45,5.97) (-6.43,5.95) (-6.41,5.93) (-6.39,5.91) (-6.36,5.89) (-6.34,5.88) (-6.32,5.86) (-6.30,5.84) (-6.28,5.82) (-6.25,5.80) (-6.23,5.78) (-6.21,5.77) (-6.19,5.75) (-6.16,5.73) (-6.14,5.71) (-6.12,5.69) (-6.10,5.67) (-6.08,5.65) (-6.05,5.64) (-6.03,5.62) (-6.01,5.60) (-5.99,5.58) (-5.97,5.56) (-5.94,5.54) (-5.92,5.52) (-5.90,5.51) (-5.88,5.49) (-5.85,5.47) (-5.83,5.45) (-5.81,5.43) (-5.79,5.41) (-5.77,5.40) (-5.74,5.38) (-5.72,5.36) (-5.70,5.34) (-5.68,5.32) (-5.66,5.30) (-5.63,5.28) (-5.61,5.27) (-5.59,5.25) (-5.57,5.23) (-5.55,5.21) (-5.52,5.19) (-5.50,5.17) (-5.48,5.15) (-5.46,5.14) (-5.43,5.12) (-5.41,5.10) (-5.39,5.08) (-5.37,5.06) (-5.35,5.04) (-5.32,5.02) (-5.30,5.01) (-5.28,4.99) (-5.26,4.97) (-5.24,4.95) (-5.21,4.93) (-5.19,4.91) (-5.17,4.90) (-5.15,4.88) (-5.13,4.86) (-5.10,4.84) (-5.08,4.82) (-5.06,4.80) (-5.04,4.78) (-5.01,4.77) (-4.99,4.75) (-4.97,4.73) (-4.95,4.71) (-4.93,4.69) (-4.90,4.67) (-4.88,4.65) (-4.86,4.64) (-4.84,4.62) (-4.82,4.60) (-4.79,4.58) (-4.77,4.56) (-4.75,4.54) (-4.73,4.52) (-4.71,4.51) (-4.68,4.49) (-4.66,4.47) (-4.64,4.45) (-4.62,4.43) (-4.60,4.41) (-4.57,4.40) (-4.55,4.38) (-4.53,4.36) (-4.51,4.34) (-4.48,4.32) (-4.46,4.30) (-4.44,4.28) (-4.42,4.27) (-4.40,4.25) (-4.37,4.23) (-4.35,4.21) (-4.33,4.19) (-4.31,4.17) (-4.29,4.15) (-4.26,4.14) (-4.24,4.12) (-4.22,4.10) (-4.20,4.08) (-4.18,4.06) (-4.15,4.04) (-4.13,4.02) (-4.11,4.01) (-4.09,3.99) (-4.06,3.97) (-4.04,3.95) (-4.02,3.93) (-4.00,3.91) (-3.98,3.90) (-3.95,3.88) (-3.93,3.86) (-3.91,3.84) (-3.89,3.82) (-3.87,3.80) (-3.84,3.78) (-3.82,3.77) (-3.80,3.75) (-3.78,3.73) (-3.76,3.71) (-3.73,3.69) (-3.71,3.67) (-3.69,3.65) (-3.67,3.64) (-3.65,3.62) (-3.62,3.60) (-3.60,3.58) (-3.58,3.56) (-3.56,3.54) (-3.53,3.52) (-3.51,3.51) (-3.49,3.49) (-3.47,3.47) (-3.45,3.45) (-3.42,3.43) (-3.40,3.41) (-3.38,3.39) (-3.36,3.38) (-3.34,3.36) (-3.31,3.34) (-3.29,3.32) (-3.27,3.30) (-3.25,3.28) (-3.23,3.26) (-3.20,3.25) (-3.18,3.23) (-3.16,3.21) (-3.14,3.19) (-3.12,3.17) (-3.09,3.15) (-3.07,3.13) (-3.05,3.12)};
\draw[curve] plot coordinates {(0.22,0.08) (0.23,0.06) (0.25,0.04) (0.27,0.01) (0.28,-0.01) (0.30,-0.03) (0.32,-0.06) (0.33,-0.08) (0.35,-0.10) (0.37,-0.12) (0.39,-0.14) (0.41,-0.16) (0.44,-0.18) (0.46,-0.20) (0.48,-0.21)};
\draw[curve] plot coordinates {(2.16,-0.47) (2.17,-0.46) (2.19,-0.46) (2.21,-0.45) (2.23,-0.45) (2.24,-0.44) (2.26,-0.44) (2.28,-0.44) (2.30,-0.43) (2.31,-0.43) (2.33,-0.42) (2.35,-0.42) (2.36,-0.41) (2.38,-0.41) (2.40,-0.40) (2.42,-0.40) (2.43,-0.39) (2.45,-0.39) (2.47,-0.38) (2.48,-0.38) (2.50,-0.37) (2.52,-0.37) (2.54,-0.36) (2.55,-0.36) (2.57,-0.35) (2.59,-0.35) (2.60,-0.34) (2.62,-0.34) (2.64,-0.33) (2.66,-0.33) (2.67,-0.32) (2.69,-0.32) (2.71,-0.31) (2.72,-0.31) (2.74,-0.30) (2.76,-0.30) (2.78,-0.29) (2.79,-0.28) (2.81,-0.28) (2.83,-0.27) (2.84,-0.27) (2.86,-0.26) (2.88,-0.26) (2.89,-0.25) (2.91,-0.25) (2.93,-0.24) (2.95,-0.23) (2.96,-0.23) (2.98,-0.22) (3.00,-0.22) (3.01,-0.21) (3.03,-0.21) (3.05,-0.20) (3.07,-0.20) (3.08,-0.19) (3.10,-0.18) (3.12,-0.18) (3.13,-0.17) (3.15,-0.17) (3.17,-0.16) (3.18,-0.16) (3.20,-0.15) (3.22,-0.14) (3.24,-0.14) (3.25,-0.13) (3.27,-0.13) (3.29,-0.12) (3.30,-0.11) (3.32,-0.11) (3.34,-0.10) (3.35,-0.10) (3.37,-0.09) (3.39,-0.08) (3.41,-0.08) (3.42,-0.07) (3.44,-0.07) (3.46,-0.06) (3.47,-0.06) (3.49,-0.05) (3.51,-0.04) (3.52,-0.04) (3.54,-0.03) (3.56,-0.03) (3.58,-0.02) (3.59,-0.01) (3.61,-0.01) (3.63,-0.00) (3.64,0.00) (3.66,0.01) (3.68,0.02) (3.69,0.02) (3.71,0.03) (3.73,0.03) (3.75,0.04) (3.76,0.05) (3.78,0.05) (3.80,0.06) (3.81,0.06) (3.83,0.07) (3.85,0.08) (3.86,0.08) (3.88,0.09) (3.90,0.10) (3.91,0.10) (3.93,0.11) (3.95,0.11) (3.97,0.12) (3.98,0.13) (4.00,0.13) (4.02,0.14) (4.03,0.14) (4.05,0.15) (4.07,0.16) (4.08,0.16) (4.10,0.17) (4.12,0.17) (4.14,0.18) (4.15,0.19) (4.17,0.19) (4.19,0.20) (4.20,0.21) (4.22,0.21) (4.24,0.22) (4.25,0.22) (4.27,0.23) (4.29,0.24) (4.31,0.24) (4.32,0.25) (4.34,0.25) (4.36,0.26) (4.37,0.27) (4.39,0.27) (4.41,0.28) (4.42,0.29) (4.44,0.29) (4.46,0.30) (4.48,0.30) (4.49,0.31) (4.51,0.32) (4.53,0.32) (4.54,0.33) (4.56,0.34) (4.58,0.34) (4.59,0.35) (4.61,0.35) (4.63,0.36) (4.64,0.37) (4.66,0.37) (4.68,0.38) (4.70,0.39) (4.71,0.39) (4.73,0.40) (4.75,0.40) (4.76,0.41) (4.78,0.42) (4.80,0.42) (4.81,0.43) (4.83,0.44) (4.85,0.44) (4.87,0.45) (4.88,0.45) (4.90,0.46) (4.92,0.47) (4.93,0.47) (4.95,0.48) (4.97,0.49) (4.98,0.49) (5.00,0.50) (5.02,0.50) (5.04,0.51) (5.05,0.52) (5.07,0.52) (5.09,0.53) (5.10,0.54) (5.12,0.54) (5.14,0.55) (5.15,0.55) (5.17,0.56) (5.19,0.57) (5.20,0.57) (5.22,0.58) (5.24,0.59) (5.26,0.59) (5.27,0.60) (5.29,0.61) (5.31,0.61) (5.32,0.62) (5.34,0.62) (5.36,0.63) (5.37,0.64) (5.39,0.64) (5.41,0.65) (5.43,0.66) (5.44,0.66) (5.46,0.67) (5.48,0.67) (5.49,0.68) (5.51,0.69) (5.53,0.69) (5.54,0.70) (5.56,0.71) (5.58,0.71) (5.59,0.72) (5.61,0.73) (5.63,0.73) (5.65,0.74) (5.66,0.74) (5.68,0.75) (5.70,0.76) (5.71,0.76) (5.73,0.77) (5.75,0.78) (5.76,0.78) (5.78,0.79) (5.80,0.80) (5.82,0.80) (5.83,0.81) (5.85,0.81) (5.87,0.82) (5.88,0.83) (5.90,0.83) (5.92,0.84) (5.93,0.85) (5.95,0.85) (5.97,0.86) (5.99,0.86) (6.00,0.87) (6.02,0.88) (6.04,0.88) (6.05,0.89) (6.07,0.90) (6.09,0.90) (6.10,0.91) (6.12,0.92) (6.14,0.92) (6.15,0.93) (6.17,0.93) (6.19,0.94) (6.21,0.95) (6.22,0.95) (6.24,0.96)};
\end{scope}
\node[cusppt] (cusp) at (0.669,-0.769) {};
\node[ratpt] (D4) at (2.157,-0.465) {};
\node[ratpt] (D11) at (0.203,0.102) {};
\node[excpt] (exc) at (0.471,-0.206) {};
\node[quadpt] (D32a) at (-0.432,-1.794) {};
\node[quadpt] (D32b) at (-3.068,3.132) {};
\node[lbl, anchor=west] at (0.66,-1.00) {cusp};
\node[lbl, anchor=west] at (2.01,-0.76) {$D=-4$};
\node[lbl, anchor=east] at (0.12,0.20) {$D=-11$};
\node[lbl, anchor=west] at (0.51,0.05) {exceptional point};
\node[lbl, anchor=east] at (-0.21,-1.48) {$D=-32$};
\node[lbl, anchor=west] at (-2.92,3.02) {$D=-32$};
\end{tikzpicture}
\caption{Collinearity on $X_0^*(137)$. The curve has model
$ wx - yw + y^2 - yz = x^3 - 2(y-z)x^2 - (y^2+yz-2z^2)x + w^2z + (y+2z)wz + z^3 = 0$ 
in $\P^3$. Intersecting with the plane $x + 2z + 2w - y = 0$ spanned by the cusp and the rational CM points of discriminants $-11$ and $-4$, we find three other points, two of which happen to be conjugate quadratic points with CM of discriminant $-32$. The third point is therefore rational; it is Galbraith's exceptional point. The curve is drawn in the affine chart $z - w = 1.$  
}
\label{fig:137}
\end{figure}

  \begin{table}[ht]
  \centering
  \small
  \setlength{\tabcolsep}{5pt}
  \begin{tabular}{ccl@{\hspace{2.5em}}ccl}
  \toprule
  $N$ & $g$ & Collinearity & $N$ & $g$ & Collinearity \\
  \midrule
  $178$ & $3$ & $-136$, $-72$              & $329$ & $3$ & $-203$, cusp \\
        &     & $-180$, $-36$              &       &     & $-52$,
  $-35$$^{\dagger}$ \\
        &     & $-388$, $-40$$^{\dagger}$  & $430$ & $3$ & $-220$,
  $-120$$^{\dagger}$ \\
        &     & $-292$, $-100$             &       &     & $-820$, cusp \\
        &     & $-64$, cusp                &       &     & $-260$, $-20$ \\
  $183$ & $3$ & $-483$, $-75$$^{\dagger}$  & $455$ & $3$ & $-819$,
  $-195$$^{\dagger}$ \\
        &     & $-408$, $-48$              &       &     & $-595$, $-259$ \\
  $246$ & $3$ & $-564$, $-36$              & $510$ & $3$ & $-480$,
  cusp$^{\dagger}$ \\
        &     & $-264$, $-168$$^{\dagger}$ & $137$ & $4$ & $-32$, $-11$, $-4$,
  cusp$^{\dagger}$ \\
        &     & $-39$, $-20$               &       &     & $-112$, $-19$, $-11$,
  $-7$ \\
  $290$ & $3$ & $-1060$, $-180$            &       &     & $-427$, $-19$, $-4$
  \\
        &     & $-64$, $-24$$^{\dagger}$   &       &     & $-72$, $-7$, $-4$ \\
        &     & $-1240$, $-100$            &       &     & $-112$, $-8$, cusp \\
  $310$ & $3$ & $-55$, cusp$^{\dagger}$    & $311$ & $4$ & $-232$, $-123$,
  $-19$$^{\dagger}$ \\
        &     & $-136$, $-120$             & $370$ & $4$ & $-340$, $-260$,
  $-16$, cusp$^{\dagger}$ \\
        &     & $-840$, $-15$              & $399$ & $4$ & $-1995$, $-483$,
  $-147$, $-84$$^{\dagger}$ \\
  $318$ & $3$ & $-600$, $-312$             &       &     & \\
        &     & $-852$, $-372$$^{\dagger}$ &       &     & \\
  \bottomrule
  \end{tabular}
  \caption{Every confirmed special hyperplane through an exceptional
  point of $X_0^*(N)$ in genus $3$ and $4$, computed at
  $\texttt{eval\_prec} = 7000$.  Each collinearity is listed by the CM
  discriminants (and the cusp, where it occurs) of the special divisor
  cut out.  A dagger marks the plane recorded in
  Tables~\ref{tab:exceptional_points_genus_3}
  and~\ref{tab:exceptional_points_genus_4}.}
  \label{tab:all_planes}
  \end{table}

    Casta\~{n}o-Bernard \cite{CastanoBernard} had already observed some interesting collinearity properties of these points. 
    Part of the inspiration for this project was discovering that {\it all five} of  the collinear points (excluding the exceptional point) in the Galbraith examples are special points, and so the collinearity principle of \cite{DHNS} applies.  See Figure \ref{fig:137}.

As indicated in Example \ref{ex: g large}, collinearity becomes unlikely for $g \geq 5$, since  $2g-3 \geq 7$ special points must lie in the same $\Q$-hyperplane $H$ and the remaining $2g-2$-th point on $H$ must also be non-special. This is consistent with our empirical results for star curves of genus $g \geq 5$. 
That $4$ out of $36$ genus four curves  $X_0^*(N)$ have such collections of size $5$ is itself remarkable.

In total, $61$ out of the $65$ known exceptional points on Atkin--Lehner quotients of squarefree level are explained by collinearity.

\subsection{Elliptic covers}\label{subsec: elliptic covers}

 Together, automorphisms and collinearity explain 63 of the 65 known exceptional points on the curves $X_0^*(N)$. The only exceptions are a pair of points on the genus 2 curve $X_0^*(286)$ that form an orbit for $\Aut(X_0^*(286)) \simeq \Z/2\Z$. This mysterious pair has already been highlighted by \cite{Genus2ANTS}. 

Our explanation for these points will simultaneously give new explanations for most of the other exceptional points as well. It is based on the fact that star curves of low composite level tend to have low degree maps to elliptic curves. 

\begin{definition}\label{def: elliptic cover}
    A point $x \in X_0^*(N)(\Q)$ is {\it explained by elliptic cover} if there is an elliptic curve $E$ and a  map $\pi \colon X_0^*(N) \to E$ such that the fiber $\pi^{-1}(\pi(x))$, taken set-theoretically, contains only special points and $x$ (a special point may occur with multiplicity, as happens in the
    ramified cases of Table~\ref{tab:exceptional_cm_fibers}).
\end{definition}

In this construction, the family of divisors is $\{\pi^{-1}(e)\}_{e \in E}$, all of degree $\deg(\pi)$. 

Post-composing $\pi$ with an isogeny or a translation does not add explanatory power. Thus, we may assume that $\pi$ belongs to the set $\Hom_{\mathrm{min}}(X_0^*(N),E)$ of maps that do not factor through an isogeny of degree $> 1$ and that send the cusp to the zero element $O_E$.

\begin{remark}
    For a general pointed curve $X$, the set $\mathrm{Hom}_{\mathrm{min}}(X,E)$ may be infinite. 
    However, we show below (Proposition~\ref{prop: triple cover classification}) that $\Hom_{\mathrm{min}}(X_0^*(N),E)$ is finite for all star curves of squarefree level and all $E$. Thus, there are finitely many elliptic factors $E$ of $\mathrm{Jac}(X_0^*(N))$, and each $E$ can explain only finitely many exceptional points, since there are finitely many special points of bounded degree.
\end{remark}

To explain exceptional points using elliptic cover, we must understand the elliptic covers out of a given star curve and their degrees. Degree two amounts to explanation via bielliptic involution, which we have already discussed. In the next section, we classify all elliptic triple covers $X_0^*(N) \to E$ with $N$ squarefree. In particular, we will find that the two remaining exceptional points on $X_0^*(286)$ are indeed explained by elliptic triple cover. 

Thus, we conclude that {\it all known exceptional points on star curves $X_0^*(N)$ of squarefree level are explained}, either by automorphism, collinearity, or elliptic cover. We do not state this as a formal theorem, as we have only checked the collinearity explanations up to a high level of precision. Nonetheless, this is the second main result of this paper.

\section{Triple covers of elliptic curves}
Throughout this section, elliptic curves are identified by their LMFDB labels \cite{lmfdb}, hyperlinked to the corresponding pages. These differ in general from Cremona labels, which our code reports: the curve \ecl{370.b.3} is
\texttt{370a1} in Cremona's notation.

Before determining the possible elliptic covers emanating from $X_0^*(N)$ and their degrees, we begin with a lemma.

\begin{lemma}\label{lem: obstruction}
Let $d$ be a squarefree integer, let $f$ be a newform of level $d$ with rational
coefficients and all Atkin--Lehner eigenvalues $+1$, and let $\varphi_d \colon
X_0(d) \to E_f$ be the corresponding optimal quotient, normalized so that
$\varphi_d(\infty) = O_{E_f}$.  Then, for each $n \mid d$, there is a point 
\[ c_n \colonequals \varphi_{d,*}\bigl[ (w_n\infty) - (\infty) \bigr] \in E_f[2](\Q) \quad \text{ such that } \quad
  \varphi_d \circ w_n = \varphi_d + c_n.
\]
Moreover, the assignment $n \mapsto c_n$ is a homomorphism $c \colon W(d) \to E_f(\Q)[2]$.  In
particular $c = 0$ whenever $E_f(\Q)[2] = 0$.
\end{lemma}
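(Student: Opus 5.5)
The plan is to work on the Jacobian. Let $\iota\colon X_0(d)\to J_0(d)$ be the Abel--Jacobi map based at $\infty$, and write $\varphi_d = \pi_f\circ\iota$, where $\pi_f\colon J_0(d)\to E_f$ is the optimal quotient map. The involution $w_n$ acts on $J_0(d)$ by functoriality, and on points we have
\[
\iota(w_n P) = (w_n P) - (\infty) = w_{n,*}\bigl[(P)-(\infty)\bigr] + \bigl[(w_n\infty)-(\infty)\bigr].
\]
Apply $\pi_f$. Since $f$ has $w_n$-eigenvalue $+1$, the endomorphism $w_{n,*}$ of $J_0(d)$ acts as $+1$ on the $f$-isotypic quotient. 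Concretely, $\pi_f\circ w_{n,*} = \pi_f$: the kernel of $\pi_f$ is stable under the Hecke algebra, and $w_n$ acts on $H^0(E_f,\Omega^1)$, pulled back to $f\,dq/q$, by $+1$, and a morphism of abelian varieties is determined by its action on differentials. This gives $\varphi_d(w_nP)=\varphi_d(P)+c_n$ with $c_n=\pi_f\bigl[(w_n\infty)-(\infty)\bigr]=\varphi_d(w_n\infty)$. The last equality uses $\varphi_d(\infty)=O$.

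Next I check that $c_n$ is rational and $2$-torsion. Rationality holds because $w_n\infty$ is a rational cusp ($d$ squarefree, $w_n\infty$ is the cusp $1/(d/n)$, defined over $\Q$) and $\varphi_d$ is defined over $\Q$. For the $2$-torsion, apply the displayed identity twice: $\varphi_d = \varphi_d\circ w_n^2 = \varphi_d + 2c_n$, so $2c_n=0$.

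For the homomorphism property, compose: $\varphi_d\circ w_{nm} = \varphi_d\circ w_n\circ w_m = (\varphi_d + c_n)\circ w_m = \varphi_d\circ w_m + c_n = \varphi_d + c_m + c_n$. Since $w_nw_m=w_{nm/\gcd(n,m)^2}$ in $W(d)$, evaluating at $\infty$ gives $c_{w_nw_m}=c_n+c_m$, which is the homomorphism property. Triviality when $E_f(\Q)[2]=0$ is then immediate.

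The only step needing care is the claim $\pi_f\circ w_{n,*}=\pi_f$. I would argue it via the differential. Both $\pi_f$ and $\pi_f\circ w_{n,*}$ are homomorphisms $J_0(d)\to E_f$ whose pullbacks of the invariant differential agree, since $w_n^*(f\,dq/q)=f\,dq/q$. In characteristic zero a homomorphism of abelian varieties is determined by its tangent map, so the two maps coincide. An equally good route is the direct one: the difference $\varphi_d\circ w_n-\varphi_d$ is a morphism from $X_0(d)$ to $E_f$ whose pullback of the invariant differential is $0$, hence it is constant.
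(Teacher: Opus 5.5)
Your proposal is correct and follows essentially the same route as the paper. You use the decomposition $(w_nP)-(\infty) = w_{n,*}[(P)-(\infty)] + [(w_n\infty)-(\infty)]$ together with $\varphi_{d,*}\circ w_{n,*}=\varphi_{d,*}$, get rationality from the rationality of the cusps of $X_0(d)$, and get the homomorphism property by composing the translation identities. The only differences are minor: you justify $\varphi_{d,*}\circ w_{n,*}=\varphi_{d,*}$ via invariant differentials, where the paper takes it as immediate from $\varepsilon_n(f)=+1$, and you obtain $2c_n=0$ directly from $w_n^2=1$, where the paper deduces it from the homomorphism property on the elementary abelian $2$-group $W(d)$.
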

 
\begin{proof}
By the normalization $\varphi_d(\infty) = O_{E_f}$ we have 
$\varphi_d(P)=\varphi_{d,*}\bigl[(P)-(\infty)\bigr]$.
The hypothesis $\varepsilon_n(f) = +1$ gives $\varphi_{d,*} \circ w_{n,*} =
\varphi_{d,*}$ on $J_0(d)$, so for any $P \in X_0(d)$, we see
\[
  \varphi_d(w_n P)
  = \varphi_{d,*}\bigl[ (w_nP) - (w_n\infty) \bigr] + c_n
  = \varphi_{d,*}\, w_{n,*}\bigl[ (P) - (\infty) \bigr] + c_n
  = \varphi_d(P) + c_n .
\]
Write $m_1 \star m_2 \colonequals m_1m_2/\gcd(m_1,m_2)^2$ for the group law on
the divisors of $N$, under which $m \mapsto w_m$ is an isomorphism onto $W(N)$.
Applying the above equality to  $P = w_{n_2}\infty$ we see that $c_{n_1 \star n_2} = \varphi_d(w_{n_1}w_{n_2}\infty) =\varphi_d(w_{n_2}\infty) + c_{n_1} = c_{n_1} + c_{n_2}$, so $c$ is a homomorphism.

Since $W(d)$ is an elementary abelian $2$-group, its image is killed
by $2$.  Finally all cusps of $X_0(d)$ are rational for $d$ squarefree, so
$(w_n\infty) - (\infty)$ is a rational divisor class and $c_n \in E_f(\Q)$.
\end{proof}

\begin{proposition}\label{prop: elliptic cover info and degree}
Let $N$ be squarefree, let $f$ be a newform of level $d \mid N$ with rational
coefficients and all Atkin--Lehner eigenvalues $+1$, and let $\varphi_d$, $c$ be
as in Lemma~\ref{lem: obstruction}.  Put $C \colonequals \im(c) \subseteq
E_f(\Q)[2]$, let $\lambda \colon E_f \to E^C_f \colonequals E_f/C$ be the
quotient isogeny, and set
\[
  \delta_f \colonequals \lvert C \rvert \cdot \deg(\varphi_d)/2^{\omega(d)} .
\]
Then there is a morphism $\pi_f \colon X_0^*(N) \to E^C_f$ of degree
\begin{equation}\label{eq: cover degree}
  \deg(\pi_f) = \delta_f \prod_{\ell \mid N/d}\bigl( \ell + 1 + a_\ell(f) \bigr).
\end{equation}
Taking $N = d$ shows $\delta_f = \deg\bigl( X_0^*(d) \to E^C_f \bigr)$; in
particular $\delta_f$ is a positive integer.  If moreover $d \neq N$, then
$\pi_f = \lambda \circ \pi_f'$ for a morphism $\pi_f' \colon X_0^*(N) \to E_f$ of
degree $\deg(\pi_f)/\lvert C\rvert$.
\end{proposition}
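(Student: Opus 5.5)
Define the map upstairs on $X_0(N)$ and check that it descends. Let $\psi = \lambda\circ\varphi_d \colon X_0(d) \to E_f^C$. By Lemma~\ref{lem: obstruction}, $\varphi_d\circ w_n = \varphi_d + c_n$ with $c_n \in C$, so $\psi\circ w_n = \psi$ for all $n\mid d$. Hence $\psi$ factors through $X_0^*(d)$. For the case $N=d$ this gives $\pi_f$ directly, and $\deg(\psi) = |C|\deg(\varphi_d)$. Since $X_0(d)\to X_0^*(d)$ has degree $2^{\omega(d)}$ ($W(d)$ acts faithfully, as $-1\notin$ the relevant group for $d>1$), the induced map has degree $|C|\deg(\varphi_d)/2^{\omega(d)} = \delta_f$. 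This proves that $\delta_f$ is a positive integer and equals $\deg(X_0^*(d)\to E_f^C)$.

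For general $N$, I build the map through degeneracy maps. For each divisor $t\mid N/d$ let $B_t\colon X_0(N)\to X_0(d)$ be $\tau\mapsto t\tau$. Consider the divisor-level map on Jacobians
\[
\Phi = \sum_{t\mid N/d} \psi_*\circ B_{t,*} \colon J_0(N)\to E_f^C ,
\]
composed with the Abel--Jacobi map based at $\infty$. Equivalently, $\pi(P)=\sum_t \psi(B_t P)$, which is a genuine morphism because $\psi(B_t\infty)$ is a constant. The key claim is that this is $W(N)$-invariant (up to a constant, then fixed by normalizing at the cusp). For $p\mid d$, $w_p^{(N)}$ intertwines with $w_p^{(d)}$ under each $B_t$, and the earlier invariance handles it. For $\ell\mid N/d$, $w_\ell$ swaps $B_t$ and $B_{\ell t}$ (composed with $w$'s of level $d$, or the identity, depending on normalization). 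Thus the sum is permuted, and invariance holds modulo constants. Constants are absorbed by translation, since the cusp of $X_0^*(N)$ is rational.

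The pullback of the invariant differential $\omega_{E^C_f}$ is $\sum_t (\text{scalar})\, f(t\tau)\,d\tau$, up to the $|C|$ factor. So $\pi_f^*$ lands on the form $F=\prod_{\ell\mid N/d}(1+\ell\, V_\ell) f$ (with suitable normalization of $V$), which is exactly the $W(N)$-fixed oldform.

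The degree is computed via $\deg\pi = \langle \pi^*\omega,\pi^*\omega\rangle/\langle\omega,\omega\rangle$ on $X_0^*(N)$, or equivalently via $\pi_*\pi^* = [\deg]$. The composition $\Phi\circ\Phi^\vee$ on $E$ equals $\sum_{t,t'}\psi_* B_{t,*}B_t'^{*}\psi^*$. On $X_0(d)$, $B_{t,*}B_{t'}^*$ is a product of Hecke operators: the diagonal terms give $\prod(\ell+1)$ and the off-diagonal terms give $T_\ell$, acting on $f$ by $a_\ell$. Summing over the subsets multiplicatively yields $\deg(\psi)\prod_{\ell}(\ell+1+a_\ell)$ on $X_0(N)$. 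Dividing by $2^{\omega(N)} = 2^{\omega(d)}2^{\omega(N/d)}$ and accounting for the factor $2^{\omega(N/d)}$ produced by summing over $t$ versus the $W$-orbit gives \eqref{eq: cover degree}. The main obstacle is this bookkeeping: getting the precise normalization of the $w_\ell$-swap of $B_1,B_\ell$ right, and making sure the factor of $2$ per $\ell$ cancels correctly.

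Finally, for $d\ne N$, I show the map already factors through $E_f$ before applying $\lambda$. Define instead $\pi'(P)=\sum_t\varphi_d(B_tP)$. Under $w_n$ with $n\mid d$, each term shifts by $c_n$, so the sum shifts by $2^{\omega(N/d)}c_n = 0$, since $c_n$ is $2$-torsion and $\omega(N/d)\ge1$. Thus $\pi'$ is $W(N)$-invariant up to constants, hence descends to $X_0^*(N)\to E_f$. We have $\lambda\circ\pi'=\pi_f$, so $\deg\pi' = \deg\pi_f/|C|$.
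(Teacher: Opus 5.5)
Your outline follows the paper's proof: a sum of degeneracy maps composed with $\varphi_d$, exact $W(N)$-invariance, $\pi_*\pi^*$ computed via Hecke relations, and the parity of $2^{\omega(N/d)}$ when $d\neq N$. However, the step you call the main obstacle is left unresolved, and your proposed workaround is wrong.

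Invariance ``up to constants'' cannot be repaired by translation. If $\pi\circ w_n=\pi+k_n$, then $(\pi+a)\circ w_n=(\pi+a)+k_n$ for every $a\in E$, so a nonzero $k_n$ genuinely obstructs descent. That is exactly the content of Lemma~\ref{lem: obstruction} and the reason the target is $E^C_f$: at $N=d=65$, $\varphi_{65}$ does not descend to a map $X_0^*(65)\to E_f$.

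You need exact invariance, and the missing ingredient is the identity $B_t=B_1\circ w_t$ on $X_0(N)$ for $t\mid N/d$. It follows from
\[
\begin{pmatrix} tx & y\\ Nz & tu\end{pmatrix}=\begin{pmatrix} x & y\\ (N/t)z & tu\end{pmatrix}\begin{pmatrix} t & 0\\ 0 & 1\end{pmatrix},
\]
where the first factor lies in $\Gamma_0(d)$ because $d\mid N/t$. Writing $t\star\ell = t\ell/\gcd(t,\ell)^2$, this gives $B_t\circ w_\ell=B_{t\star\ell}$ on the nose, with no level-$d$ twist. Combined with $B_1\circ w^{(N)}_{n}=w^{(d)}_{n}\circ B_1$ for $n\mid d$, it yields the pointwise identity $\sum_t\varphi_d\circ B_t\circ w_n=\sum_t\varphi_d\circ B_t+2^{\omega(N/d)}c_{\gcd(n,d)}$ for all $n\mid N$. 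This settles both descents at once. For $\pi'$, which is built from $\varphi_d$ rather than the $W(d)$-invariant $\psi$, whether the swap carries a level-$d$ Atkin--Lehner twist is not a harmless normalization. Any such twist contributes translations $c_n$ that you would have to track, so it must be proved, not left open.

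The same identity is what makes the degree computation work. It gives $B_{t,*}B_{t'}^{*}=B_{1,*}w_{t\star t'}B_1^{*}=B_{m,*}B_1^{*}$ with $m=t\star t'$, and a moduli count gives $B_{m,*}B_1^{*}=\prod_{\ell\mid N/(dm)}(\ell+1)\,T_m$ on $J_0(d)$. So the off-diagonal terms contribute $T_m$ for every nontrivial $m\mid N/d$, not only $T_\ell$, weighted by the leftover factors $\ell+1$. Each $m$ arises from exactly $2^{\omega(N/d)}$ pairs $(t,t')$. Hence
\[
\pi_*\pi^*=2^{\omega(N/d)}\deg(\psi)\prod_{\ell\mid N/d}\bigl(\ell+1+a_\ell(f)\bigr),
\]
not $\deg(\psi)\prod_\ell(\ell+1+a_\ell(f))$ as you wrote. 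Dividing by $2^{\omega(N)}$ and using $\deg\psi=\lvert C\rvert\deg(\varphi_d)$ gives \eqref{eq: cover degree} directly, with no further ``accounting'' needed.

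Finally, invoke the Weil bound $\lvert a_\ell(f)\rvert\le 2\sqrt{\ell}<\ell+1$. It makes every factor positive, so $\pi_*\pi^*\neq 0$ and $\pi$ is a nonconstant morphism whose degree is $\pi_*\pi^*$. Your expansion of $\pi_*\pi^*$ as a double sum via $\Phi^\vee=\sum_t B_t^{*}\psi^{*}$ is fine.
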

 
\begin{proof} We begin with some preliminaries about degeneracy operators and Hecke operators.
For $m \mid N/d$ let $i_m \colon X_0(N) \to X_0(d)$ be the degeneracy map
induced by $z \mapsto mz$.  If $x,y,z,u \in \Z $ satisfy $m^2xu - Nyz = m$, then $w_m$
on $X_0(N)$ is represented by
\[
\begin{pmatrix} mx & y \\ Nz & mu \end{pmatrix}
=
\begin{pmatrix} x & y \\ (N/m)z & mu \end{pmatrix}
\begin{pmatrix} m & 0 \\ 0 & 1 \end{pmatrix}
\in
\Gamma_0(d) \begin{pmatrix} m & 0 \\ 0 & 1 \end{pmatrix},
\]
the first factor lying in $\Gamma_0(d)$ because $d \mid N/m$, with determinant
$(m^2xu - Nyz)/m = 1$.  Hence
\begin{equation}\label{eq: degeneracy is AL twist}
  i_m = i_1 \circ w_m .
\end{equation}
Set $\pi_m \colonequals \varphi_d \circ i_m$ and $\pi \colonequals \sum_{m \mid
N/d} \pi_m \colon X_0(N) \to E_f$, the summation being addition on $E_f$.  We set
$t \colonequals 2^{\omega(N/d)}$ to be the number of divisors of $N/d$.
 
For $m \mid N/d$,
\begin{equation}\label{eq: degeneracy Hecke}
  i_{m,*}\, i_1^{*} = \Bigl( \prod_{\ell \mid N/(dm)} (\ell+1) \Bigr) \cdot T_m
  \qquad \text{on } J_0(d).
\end{equation}
Indeed, on moduli $i_1^{*}(E, C_d) = \sum_{C_{N/d}} (E, C_d, C_{N/d})$, the sum
running over the $\prod_{\ell \mid N/d}(\ell+1)$ cyclic subgroups of order $N/d$
that meet $C_d$ trivially, while $i_m(E, C_d, C_{N/d}) = (E/C_m, \overline{C_d})$
depends only on the subgroup $C_m \subseteq C_{N/d}$ of order $m$.  Each such
$C_m$ arises from exactly $\prod_{\ell \mid N/(dm)}(\ell+1)$ subgroups
$C_{N/d}$, and summing over $C_m$ gives $T_m$.
 
We now compute the degree of $\pi$.
For a nonconstant morphism $\phi
\colon X_0(N) \to E_f$ write $\phi^{*} \colon E_f \to J_0(N)$ for pullback of
divisor classes and $\phi_{*} \colon J_0(N) \to E_f$ for the norm, so
$\phi_{*}\phi^{*} = \deg \phi$ in $\End(E_f)$.
Both $\phi \mapsto \phi_{*}$
and $\phi \mapsto \phi^{*}$ are additive in $\phi$: for $\phi_{*}$ this is
immediate from the Albanese description on divisors, and for $\phi^{*}$ it
follows from $\sigma^{*}L \cong p_1^{*}L \otimes p_2^{*}L$ for $L \in
\Pic^0(E_f)$, with $\sigma, p_1, p_2 \colon E_f \times E_f \to E_f$ the sum and
the projections \cite[\S 5, Cor.\ 6]{Mumford}. 
Hence
$\pi_{*}\pi^{*} = \sum_{m_1,m_2} \pi_{m_1,*} \pi_{m_2}^{*}$.  Fix $m_1,m_2$ and
set $m \colonequals m_1 \star m_2$.  Since $w_{m_2}$ is an involution,
$w_{m_2}^{*} = w_{m_2,*}$, so by \eqref{eq: degeneracy is AL twist},
\[
  \pi_{m_1,*} \pi_{m_2}^{*}
  = \varphi_{d,*}\, i_{1,*}\, w_{m_1,*} w_{m_2,*}\, i_1^{*}\, \varphi_{d}^{*}
  = \varphi_{d,*}\, i_{m,*}\, i_1^{*}\, \varphi_{d}^{*},
\]
using $i_{1,*}w_{m,*} = i_{m,*}$.  Combining \eqref{eq: degeneracy Hecke} with
$\varphi_{d,*} \circ T_m = a_m(f) \varphi_{d,*}$ and $\varphi_{d,*}
\varphi_{d}^{*} = \deg(\varphi_d)$, and noting that each $m \mid N/d$ arises as
$m_1 \star m_2$ from exactly $t$ pairs,
\[
  \deg \pi
  = t \deg(\varphi_d) \sum_{m \mid N/d} a_m(f) \prod_{\ell \mid N/(dm)} (\ell+1)
  = 2^{\omega(N)} \frac{\deg(\varphi_d)}{2^{\omega(d)}}
    \prod_{\ell \mid N/d} \bigl( \ell+1+a_\ell(f) \bigr),
\]
the second equality on expanding the product, each $\ell$ contributing $\ell+1$
if $\ell \nmid m$ and $a_\ell(f)$ if $\ell \mid m$, and using $a_m(f) =
\prod_{\ell \mid m} a_\ell(f)$ for $m$ squarefree and coprime to $d$.  By the
Weil bound $\lvert a_\ell(f)\rvert \le 2\sqrt\ell < \ell+1$, every factor is
positive, so $\pi_{*}\pi^{*} \neq 0$; hence $\pi$ is nonconstant and
$\deg\pi = \pi_{*}\pi^{*}$.  (For $d = N$ the product is empty and this reads
$\deg(\varphi_N) = 2^{\omega(N)}\delta_f/\lvert C\rvert$.)

Let $n \mid N$
and write $n = n_1n_2$ with $n_1 \mid d$, $n_2 \mid N/d$; this is
unique since $N$ is squarefree.  We now show that $\pi \circ w_n =  \pi + t \cdot c_{n_1}$, which will allow us to descend computations to the quotient.
By \eqref{eq: degeneracy is AL twist} and the
commutativity of $W(N)$,
\[
  \pi_m \circ w_n = \varphi_d \circ i_1 \circ w_m w_{n_1} w_{n_2}
  = \varphi_d \circ i_1 \circ w_{n_1} \circ w_{m \star n_2} .
\]
Since $n_1 \mid d$, the degeneracy map $i_1$ intertwines the Atkin--Lehner
involutions at $n_1$ on the two levels, $i_1 \circ w^{(N)}_{n_1} =
w^{(d)}_{n_1} \circ i_1$,
so Lemma~\ref{lem: obstruction} gives $\pi_m \circ w_n
= \pi_{m \star n_2} + c_{n_1}$.  Summing over $m \mid N/d$, during which
$m \mapsto m \star n_2$ merely permutes the summands, we see,
\begin{equation}\label{eq: invariance defect}
  \pi \circ w_n = \pi + t \cdot c_{n_1}.
\end{equation}
 
Finally we descend to the quotient.
Since $c_{n_1} \in C = \ker\lambda$, equation
\eqref{eq: invariance defect} gives $(\lambda \circ \pi) \circ w_n = \lambda
\circ \pi$ for every $n \mid N$, so $\lambda \circ \pi$ factors as $\pi_f \circ
q_N$ with $q_N \colon X_0(N) \to X_0^*(N)$ of degree $2^{\omega(N)}$.  Dividing
$\deg(\lambda \circ \pi) = \lvert C\rvert \deg \pi$ by $2^{\omega(N)}$ gives
\eqref{eq: cover degree}.  If $d \neq N$ then $t$ is even while $c_{n_1} \in
E_f[2]$, so $t \cdot c_{n_1} = 0$ and $\pi$ is itself $W(N)$-invariant; it
descends to $\pi_f' \colon X_0^*(N) \to E_f$ with $\pi_f = \lambda \circ \pi_f'$. 
\end{proof}
 
\begin{remark}\label{rem: delta}
The naive normalization $\deg(\varphi_d)/2^{\omega(d)}$ is \emph{not} in general
the degree of a morphism out of $X_0^*(d)$, and need not be an integer: by
Lemma~\ref{lem: obstruction}, $\varphi_d$ descends to $X_0^*(d)$ only when
$c = 0$, which holds whenever $E_f(\Q)[2] = 0$ but can fail.  The corrected
constant $\delta_f$ absorbs the factor $\lvert C\rvert$ and is a genuine degree,
with target $E^C_f$ rather than $E_f$.
\end{remark}
 
\begin{example}\label{ex: level 65}
For $d = N = 65$ one has $E_f = \ecl{65.a.1}$, $\deg(\varphi_{65}) = 2$ and
$\lvert C\rvert = 2$, so $\delta_f = 2 \cdot 2/4 = 1$ and $X_0^*(65) \cong
E^C_f = \ecl{65.a.2}$, the curve $2$-isogenous to $E_f$, in agreement with
the standard model of this quotient.  Note that $\deg(\varphi_{65})/2^{\omega(65)}
= 1/2$ is not an integer, and that the target is not the $X_0(65)$-optimal curve.
The same phenomenon occurs at $d = N = 185$, where $E_f = \ecl{185.c.1}$ and
$E^C_f = \ecl{185.c.2}$.  These are the only two levels $d$ with $c \neq 0$
among the divisors of the levels $N$ occurring in
Table~\ref{tab: triple covers} and in
Tables~\ref{tab:exceptional_points_genus_3}
and~\ref{tab:exceptional_points_genus_4}.
\end{example}

 Finally, the next lemma addresses the minimality of $\pi_f$.
\begin{lemma}\label{lem: manin}
In the situation of Proposition~\ref{prop: elliptic cover info and degree},
suppose $c = 0$, so that $E^C_f = E_f$.  Let $\rho \colon X_0^*(N) \to E_f$
be a morphism sending the cusp to $O_{E_f}$ and with $\rho_*$ generating the rank 1
group $\Hom(J_0^*(N), E_f)$, and write $\pi_f = [k] \circ \rho$.  Then $k$
divides the Manin constant $c_d$ of $\varphi_d$.  In particular, if $c_d = 1$
then $\deg(\pi_f)$ is the least degree of a morphism $X_0^*(N) \to E_f$.
\end{lemma}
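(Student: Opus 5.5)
The plan is to compare the pushforward $\pi_{f,*}\colon J_0^*(N)\to E_f$ with the generator $\rho_*$ of $\Hom(J_0^*(N),E_f)$, and to read off $k$ by pulling both back to $J_0(d)$ through the construction in Proposition~\ref{prop: elliptic cover info and degree}. First, $\Hom(J_0^*(N),E_f)$ has rank $1$. The $f$-isotypic part of $J_0(N)$ is $E_f^{t}$ with $t=2^{\omega(N/d)}$, and the $W(N)$-invariant piece of it is one-dimensional up to isogeny. Since $\pi_{f,*}$ and $\rho_*$ both send the cusp class to $0$ and $\rho_*$ generates, we get $\pi_{f,*}=k\rho_*$ in $\Hom(J_0^*(N),E_f)$. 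Because morphisms of pointed curves to $E_f$ are determined by their Albanese maps, this gives $\pi_f=[k]\circ\rho$.

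To bound $k$, pull back along $q_N\colon X_0(N)\to X_0^*(N)$ and restrict to $J_0(d)$ via $i_1^*$. Using $c=0$ (so $\lambda=\mathrm{id}$ and $\pi_f\circ q_N=\pi$), we have $\pi_* = \pi_{f,*}\circ q_{N,*}$ on $J_0(N)$. Since $\pi=\sum_m \varphi_d\circ i_m$, the composite $\pi_*\circ i_1^*$ equals $\sum_m \varphi_{d,*} i_{m,*} i_1^*$. By \eqref{eq: degeneracy Hecke} this is $\varphi_{d,*}\cdot\sum_{m\mid N/d} a_m(f)\prod_{\ell\mid N/(dm)}(\ell+1)$, an integer multiple $M\varphi_{d,*}$ with $M$ equal to the product $\prod_{\ell\mid N/d}(\ell+1+a_\ell(f))$. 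This route seems lossy, however, because it introduces an extra factor. It is cleaner to work at the level of differentials, which is where the Manin constant naturally enters.

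\emph{Differential route.} Let $\omega_E$ be a Néron differential on $E_f$. By definition of the Manin constant, $\varphi_d^*\omega_E=\pm c_d\cdot 2\pi i f(\tau)d\tau$. Now $\pi^*\omega_E=\sum_m i_m^*\varphi_d^*\omega_E$, which equals $\pm c_d$ times the form $\sum_m m f(m\tau)$, and this form has $q$-expansion with leading coefficient $1$, so it is primitive and integral. Write $g(\tau)$ for the corresponding $W(N)$-invariant cusp form on $X_0^*(N)$, whose $q$-expansion has $a_1=1$. Then $\pi_f^*\omega_E=\pm c_d\,g\,dq/q$. On the other hand, $\pi_f^*\omega_E=k\,\rho^*\omega_E$, and $\rho^*\omega_E$ has an integral $q$-expansion at the rational cusp. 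This integrality is the main obstacle. I would prove it by extending $\rho$ to the smooth locus of a regular model of $X_0^*(N)$ over $\Z[1/N]$ and using the $q$-expansion principle there. This gives integrality away from $N$. At primes $\ell\mid N$ I would argue via the Néron model and the cusp's section, following the standard argument of Edixhoven and Mazur that the Manin constant is an integer. Granting integrality, compare leading coefficients: $k$ times an integer equals $\pm c_d$, so $k\mid c_d$.

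For the last assertion, take $c_d=1$. Then $k=\pm1$, so $\pi_f$ agrees with $\rho$ up to sign. Now let $\psi\colon X_0^*(N)\to E_f$ be any nonconstant morphism. After translating, $\psi_*=n\rho_*$ for some integer $n\neq0$. Translation does not change the degree, so $\deg\psi=\psi_*\psi^*=n^2\rho_*\rho^*=n^2\deg\rho\ge\deg\pi_f$, using additivity of $\phi\mapsto\phi_*,\phi^*$ as in the proof of Proposition~\ref{prop: elliptic cover info and degree}. Hence $\deg(\pi_f)$ is minimal among morphisms $X_0^*(N)\to E_f$.
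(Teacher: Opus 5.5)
Your differential route is essentially the paper's proof. Both arguments write $\pi^*\omega=\pm c_d\,h\,dq/q$ with $h=\sum_{m\mid N/d} m f(mz)$ and $a_1(h)=1$, use integrality of the pullback of the N\'eron differential via the $q$-expansion principle, and compare leading coefficients. The paper passes through an auxiliary parametrization $\mu$ of $X_0(N)$ and gets $k\mid a\mid c_d$, while you use $\rho$ directly, which is slightly cleaner. Your prime-by-prime integrality sketch can be collapsed: $q_N$ is unramified at $\infty$, so $\rho^*\omega$ and $(\rho\circ q_N)^*\omega$ have the same $q$-expansion, and the standard N\'eron-model argument on $X_0(N)$ over $\Z$ then applies once.
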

 
\begin{proof}
Let $\mu \colon X_0(N) \to E_f$ be the optimal parametrization, so that $\mu_*$
generates the space $\Hom(J_0(N), E_f)$, and write $\pi = [a] \circ \mu$ and
$\rho \circ q_N = [b] \circ \mu$ with $a,b \in \Z$.  From $\pi = \pi_f \circ q_N
= [k] \circ \rho \circ q_N$ we get $a = kb$, so it suffices to show
$a \mid c_d$.
 
Let $\omega$ be the N\'eron differential of $E_f$.  
Pullback of invariant differentials is additive in the morphism, (see Step~3 Proposition~\ref{prop: elliptic cover info and degree}), so 
\[
  \pi^{*}\omega = \sum_{m \mid N/d} i_m^{*}\varphi_d^{*}\omega
  = c_d \cdot 2\pi i \Bigl( \sum_{m \mid N/d} m f(mz) \Bigr) dz ,
\]
using $\varphi_d^{*}\omega = c_d \cdot 2\pi i f(z)\,dz$ and $i_m^{*}(f(z)dz) =
m f(mz)\,dz$. 
The form $h(z) \coloneqq \sum_{m \mid N/d} m f(mz)$ in $S_2(\Gamma_0(N))$, has integral
$q$-expansion and has $a_1(h) = 1$ since only $m = 1$ contributes.  On the other
hand $\pi^{*}\omega = a\,\mu^{*}\omega$, and $\mu^{*}\omega$ extends to a global
section of the sheaf of regular differentials on the minimal regular model of $X_0(N)$ over $\Z$, hence has
integral $q$-expansion at $\infty$ by the $q$-expansion principle for $X_0(N)$ over $\Z$
\cite{DeligneRapoport}.  Writing $\mu^{*}\omega = 2\pi i\,G(z)\,dz$ we get
$G = (c_d/a)h$ with $G$ integral, and comparing first coefficients gives
$a \mid c_d$.
\end{proof}
 
\begin{remark}\label{rem: manin one}
The Manin constant of every optimal quotient occurring in
Proposition~\ref{prop: triple cover classification} equals $1$, as recorded in
\cite{CremonaTables,lmfdb};
Lemma~\ref{lem: manin}, the morphism $\pi_f$ of
Proposition~\ref{prop: elliptic cover info and degree} is therefore minimal in
all cases relevant below, and \eqref{eq: cover degree} computes the least degree
of an elliptic cover with the given target.  (When $\deg(\pi_f)$ is squarefree
this is automatic, since $\deg(\pi_f) = k^2\deg(\rho)$.)
\end{remark}

     When $\deg(\pi_f) = 2$, the curve $X_0^*(N)$ is bielliptic and an exceptional point is explained by elliptic double cover if and only if it is explained by the corresponding bielliptic involution.  This applies to all but two exceptional points in genus $3$ (see Table \ref{tab:exceptional_points_genus_3}), and demonstrates yet another overlap in our  constructions.

    To get previously unexplained points, we consider covers of degree $3$ or higher.

\begin{example}\label{ex: N = Mell}
    Suppose $N = M\ell$ for a prime $\ell \nmid M$ and take $d = M$.  Assume
    that $X_0^*(M)$ has genus $1$; taking the cusp as origin makes it an
    elliptic curve, and Proposition~\ref{prop: elliptic cover info and degree}
    applied with $N$ replaced by $M$ gives an isogeny
    $X_0^*(M) \to E^C_f$ of degree $\delta_f$, which is an isomorphism when
    $\delta_f = 1$.  Composing the map
    $\Pi \colon X_0^*(M\ell) \to \Sym^2(X_0^*(M))$ sending a $W(M\ell)$-orbit to
    the two $W(M)$-orbits comprising it with the addition law
    $\Sym^2(X_0^*(M)) \to X_0^*(M)$ then yields, under this identification, the
    morphism $\pi_f$ of Proposition~\ref{prop: elliptic cover info and degree}.
    Note that the target is $E^C_f$ and not $E_f$: for $M = 65$ one has
    $X_0^*(65) \cong \ecl{65.a.2}$ while $E_f = \ecl{65.a.1}$
    (Example~\ref{ex: level 65}), and the two differ by the $2$-isogeny
    $\lambda$.
    From the moduli interpretation, it is easy to see that $\Pi$ is injective on non-CM points, hence generically injective. However, it may happen that $\Pi(x_1) = \Pi(x_2)$ for CM points $x_1,x_2$, as the next example shows.
\end{example}

\begin{example}\label{ex: 286}
     We specialize Example \ref{ex: N = Mell} to $N = 286 = 2 \cdot 11 \cdot 13$ and $\ell = 2$. The curve $\href{https://www.lmfdb.org/ModularForm/GL2/Q/holomorphic/143/2/a/a/}{E_f = X_0^*(143)}$ has $a_2(f) = 0$, so $\pi_f \colon X_0^*(286) \to X_0^*(143)$ is a triple cover. 
     Let $K = \Q(\sqrt{-39})$. 
     By Example \ref{ex: N = 286 D = -39}, 
     there are two points $x_1,x_2 \in X_0^*(286)(\overline \Q)$ with CM by $\calO_K$,
represented by the two ``isogeny cubes'' in Figure \ref{fig:my_image}. 
The shaded/striped symmetry shows that the map $\Pi \colon X_0^*(286) \to \Sym^2(X_0^*(143))$ from Example \ref{ex: N = Mell} sends $x_1$ and $x_2$ to the same unordered pair of ``isogeny squares''. The corresponding fiber of $\pi_f$ is therefore $\{x, x_1, x_2\}$ for some $x \in X_0^*(286)(\overline\Q)$. Since $\{x_1,x_2\} = \overline{\mathrm{CM}}(\calO_K)$ is $\Gal_\Q$-stable, so is the fiber, and hence $x$ is a rational point. We check in Magma that $x$ is one of the exceptional points on $X_0^*(286)$.
\begin{figure}[htbp]
    \centering 
\begin{tikzpicture}[line cap=round, line join=round,
    edge/.style={black, line width=0.9pt},
    behind/.style={black!50, line width=0.6pt, dash pattern=on 2.2pt off 2.4pt},
    E0/.style={circle, fill=black, minimum size=6.8pt, inner sep=0pt},
    E1/.style={circle, draw=black, fill=white, line width=0.7pt, minimum size=6.8pt, inner sep=0pt},
    E2/.style={rectangle, fill=black, minimum size=6.4pt, inner sep=0pt},
    E3/.style={rectangle, draw=black, fill=white, line width=0.7pt, minimum size=6.4pt, inner sep=0pt},
    elab/.style={font=\footnotesize, inner sep=1pt},
    leg/.style={font=\footnotesize, inner sep=1.5pt},
    facegray/.style={fill=black!10},
    facehatch/.style={pattern=north east lines, pattern color=black!40}]

\begin{scope}
\coordinate (v000) at (0,0);
\coordinate (v100) at (2.6,0);
\coordinate (v010) at (1.3,0.9);
\coordinate (v110) at (3.9,0.9);
\coordinate (v001) at (0,2.6);
\coordinate (v101) at (2.6,2.6);
\coordinate (v011) at (1.3,3.5);
\coordinate (v111) at (3.9,3.5);

\fill[facehatch] (v000) -- (v100) -- (v110) -- (v010) -- cycle;
\fill[facegray]  (v001) -- (v101) -- (v111) -- (v011) -- cycle;

\draw[behind] (v000) -- (v010);
\draw[behind] (v010) -- (v110);
\draw[behind] (v010) -- (v011);
\draw[edge] (v000) -- (v100) node[elab, midway, below=2.5pt] {$13$};
\draw[edge] (v100) -- (v110) node[elab, midway, below right=1.5pt] {$11$};
\draw[edge] (v000) -- (v001) node[elab, midway, left=2.5pt] {$2$};
\draw[edge] (v100) -- (v101);
\draw[edge] (v110) -- (v111);
\draw[edge] (v001) -- (v101);
\draw[edge] (v101) -- (v111);
\draw[edge] (v011) -- (v111);
\draw[edge] (v001) -- (v011);

\node[E0] at (v000) {};
\node[E2] at (v100) {};
\node[E1] at (v010) {};
\node[E3] at (v110) {};
\node[E1] at (v001) {};
\node[E3] at (v101) {};
\node[E2] at (v011) {};
\node[E0] at (v111) {};
\end{scope}

\begin{scope}[xshift=7.4cm]
\coordinate (v000) at (0,0);
\coordinate (v100) at (2.6,0);
\coordinate (v010) at (1.3,0.9);
\coordinate (v110) at (3.9,0.9);
\coordinate (v001) at (0,2.6);
\coordinate (v101) at (2.6,2.6);
\coordinate (v011) at (1.3,3.5);
\coordinate (v111) at (3.9,3.5);

\fill[facegray]  (v000) -- (v100) -- (v110) -- (v010) -- cycle;
\fill[facehatch] (v001) -- (v101) -- (v111) -- (v011) -- cycle;

\draw[behind] (v000) -- (v010);
\draw[behind] (v010) -- (v110);
\draw[behind] (v010) -- (v011);
\draw[edge] (v000) -- (v100) node[elab, midway, below=2.5pt] {$13$};
\draw[edge] (v100) -- (v110) node[elab, midway, below right=1.5pt] {$11$};
\draw[edge] (v000) -- (v001) node[elab, midway, left=2.5pt] {$2$};
\draw[edge] (v100) -- (v101);
\draw[edge] (v110) -- (v111);
\draw[edge] (v001) -- (v101);
\draw[edge] (v101) -- (v111);
\draw[edge] (v011) -- (v111);
\draw[edge] (v001) -- (v011);

\node[E0] at (v000) {};
\node[E2] at (v100) {};
\node[E3] at (v010) {};
\node[E1] at (v110) {};
\node[E1] at (v001) {};
\node[E3] at (v101) {};
\node[E0] at (v011) {};
\node[E2] at (v111) {};
\end{scope}

\end{tikzpicture}

    \caption{CM points of discriminant $-39$ on $X_0^*(286)$. Each elliptic curve $E_0, \dots, E_3$ is represented by a shape (filled/empty circle, filled/empty square) while $2$,$11$, and $13$-isogenies connect along each coordinate axis.}
    \label{fig:my_image}
\end{figure}
\end{example}

Example \ref{ex: 286} shows that  ``elliptic cover'' explanations can sometimes be seen directly from the  moduli interpretation of the cover and the symmetries of  CM points (though one must still check that the ``explained'' point is exceptional and not some other CM point).  For the more general triple covers appearing in Proposition \ref{prop: elliptic cover info and degree}, we use the method of \cite{BarsGonzalezRovira} to compute explicit equations for the map in Magma, and we check if any rational or quadratic CM points happen to share a fiber. We find $11$ total instances of this; see Table~\ref{tab:exceptional_cm_fibers}. Most, but not all, of these exceptional points have already been explained in other ways. 

  \begin{table}[ht!]
  \centering
  \begin{tabular}{ccccl}
  \toprule
  $N$ & \textbf{Genus}   & $E$ & $D$ &  \textbf{Fiber type} \\
  \midrule
  $154$   & $2$ & \ecl{77.a.1}  & $-63$        & degree $2$ CM point \\
  $154$ & 2  & \ecl{154.a.2} & $-28$        & ramified rational CM \\
  $285$ & $2$ & \ecl{57.a.1}  & $-219$   & degree $2$ CM point \\
  $285$ & $2$& \ecl{285.b.2} & $-15$      & ramified rational CM \\
  $286$ & $2$  & \ecl{143.a.1} & $-39$    & degree $2$ CM point \\
  $286$ & $2$  & \ecl{286.b.1} & $-220$    & degree $2$ CM point \\
  $246$ & $3$  & \ecl{123.b.1} & $-228$      & ramified rational CM \\
  $310$ & $3$  & \ecl{155.c.1} & $-520$         & ramified rational CM \\
  $318$ & $3$  & \ecl{318.a.1} & $-15$         & ramified rational CM \\
  $430$ & $3$  & \ecl{430.a.1} & $-280$      & ramified rational CM \\
  $399$ & $4$  & \ecl{57.a.1}  & $-75,\ -483$  & split rational CM \\
  \bottomrule
  \end{tabular}
  \caption{Exceptional points in special fibers of triple covers
  $X_0^*(N)\to E$, where $E$ is given by a Cremona label. }
  \label{tab:exceptional_cm_fibers}
  \end{table}

Example \ref{ex: 286} and Table \ref{tab:exceptional_cm_fibers} show that the two remaining exceptional points on $X_0^*(286)$ are explained by elliptic triple cover. We therefore deduce that (to a high degree of precision)
    all $65$ known exceptional points on star curves $X_0^*(N)$ of squarefree level are explained by (at least one of) automorphism, collinearity, or elliptic triple cover.

We also computed  all elliptic triple covers, to see if any low degree CM points yield new exceptional points (as unlikely as it is that these would not have been found in our broad search). Since $\gamma(X_0^*(N)) \to \infty$ as $N \to \infty$, there are only finitely many star curves admitting an elliptic triple cover. More precisely, we have the following classification. 

\begin{proposition}\label{prop: triple cover classification}
Let $N$ be squarefree.  The curve $X_0^*(N)$ admits a triple cover of an elliptic
curve $E$ if and only if the triple $(N,d,E)$, with $d$ the level of the
corresponding newform, occurs in Table~\ref{tab: triple covers}.
\end{proposition}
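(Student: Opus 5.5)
The plan has two halves. The first reduces an arbitrary triple cover to the covers $\pi_f$ of Proposition~\ref{prop: elliptic cover info and degree}. The second bounds $N$ and then searches a finite list.

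\emph{Reduction.} Let $\rho\colon X_0^*(N)\to E$ be a degree-$3$ morphism, normalized so that it sends the cusp to $O_E$. We may assume $\rho$ is minimal; otherwise it factors through an isogeny, and $E$ can be replaced by a curve of smaller degree. Then $\rho_*\colon J_0^*(N)\to E$ is surjective, so $E$ is isogenous to an optimal quotient $E_f$. Here $f$ is a newform of some level $d\mid N$ with rational coefficients. Since $S_2(\Gamma_0^*(N))$ is spanned by the oldforms $\sum_{m\mid N/d}\epsilon_m f(mz)$ with all Atkin--Lehner signs $+1$, the form $f$ has all Atkin--Lehner eigenvalues $+1$ at $d$. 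Moreover, the $f$-isotypic part of $J_0^*(N)$ is one-dimensional, namely the image of the pushforward in Proposition~\ref{prop: elliptic cover info and degree}, so $\Hom(J_0^*(N),E_f)$ has rank $1$. Consequently $\rho$ equals $\pi_f$, up to the isogeny $\lambda$ and integer multiples, exactly as in Lemma~\ref{lem: manin}. Since $\deg(\pi_f)=k^2\deg(\rho)$, and the Manin constants are $1$ (Remark~\ref{rem: manin one}), we get $k=1$. Hence the minimal degree of a cover with isogeny class $[E_f]$ is given by \eqref{eq: cover degree}, up to the factor $|C|$ and possibly a further isogeny. I would handle the isogeny class of $E_f$ by noting that composing with an isogeny $E'\to E_f$ multiplies the degree: a degree-$3$ cover to a curve $E'$ isogenous to $E_f$ corresponds to $\pi_f$ of degree $3\deg(\psi)$ factoring through $\psi\colon E'\to E_f$. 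So the candidates $(E,\text{degree }3)$ are read off from the covers $\pi_f$ of degree $3\cdot(\text{isogeny degree})$ together with a factorization check.

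\emph{Finiteness.} A triple cover gives $\gamma(X_0^*(N))\le 6$ over $\overline\Q$, since $E$ has gonality $2$. Therefore $\gamma(X_0(N))\le 6\cdot 2^{\omega(N)}$. Combined with the Abramovich/Kim--Sarnak bound of Remark~\ref{rem:genus}, this gives
\[
[\PSL_2(\Z):\overline{\Gamma_0(N)}]\le 6\cdot 2^{\omega(N)}\cdot 2^{15}/325,
\]
which bounds $N$ and yields an explicit finite list of squarefree levels. Next, each factor $\ell+1+a_\ell(f)$ is at least $\ell+1-2\sqrt\ell\ge 1$, and it is $\ge 2$ unless $\ell$ is tiny. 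So \eqref{eq: cover degree} being $3$ forces $\omega(N/d)\le 1$, and $\delta_f\in\{1,3\}$. Concretely, either $d=N$ and $\delta_f=3$, or $N=d\ell$ with $\delta_f=1$ and $\ell+1+a_\ell(f)=3$. The second case means $a_\ell(f)=2-\ell$, which leaves $\ell=2$ with $a_2=0$ or $\ell=3$ with $a_3=-1$, because $|a_\ell|\le2\sqrt\ell$ rules out $\ell\ge5$.

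\emph{Enumeration.} Over the finite list of levels, I would use Cremona's tables to run through all rational newforms $f$ of level $d\mid N$ with all Atkin--Lehner signs $+1$. For each, compute $\delta_f$ from $\deg(\varphi_d)$ and $C$ via Lemma~\ref{lem: obstruction}, evaluate \eqref{eq: cover degree}, and also consider the isogenous curves. The triples that produce degree $3$ form Table~\ref{tab: triple covers}. The converse direction is immediate from Proposition~\ref{prop: elliptic cover info and degree}.

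The main obstacle is the reduction step. I must show rigorously that every minimal cover to a curve in the isogeny class is accounted for. This requires multiplicity one for $f$ in $J_0^*(N)$ and control of the isogeny factor. Then, for targets $E\ne E_f$, I must check whether $\pi_f$ factors through the dual isogeny. That check is done by computing the kernel of $\pi_{f,*}$ on $J_0^*(N)$, or by checking degree divisibility. It is the step where I expect care with non-optimal curves such as $E^C_f$.
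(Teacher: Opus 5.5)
\textbf{Gap: the case analysis of the degree formula is wrong.} You analyse $\delta_f\prod_{\ell\mid N/d}(\ell+1+a_\ell(f))=3$ incorrectly, and this is where the classification actually lives. There are two errors.

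\emph{A factor can equal $1$.} The equation $\ell+1+a_\ell(f)=1$ means $a_\ell(f)=-\ell$, and the Weil bound allows this for $\ell=2$ ($a_2=-2$) and $\ell=3$ ($a_3=-3$). So $\deg(\pi_f)=3$ does not force $\omega(N/d)\le 1$. Nor does $\delta_f=3$ force $d=N$.

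\emph{The Weil bound does not stop at $\ell=3$.} The value $a_\ell(f)=2-\ell$ satisfies $|a_\ell(f)|\le 2\sqrt{\ell}$ also for $\ell=5$ ($a_5=-3$) and $\ell=7$ ($a_7=-5$). It is only excluded from $\ell\ge 11$ on.

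The correct dichotomy, as in the paper, has two cases.
\begin{enumerate}[(a)]
\item $\delta_f=1$, exactly one $\ell\mid N/d$ contributes $3$ (so $\ell\in\{2,3,5,7\}$), and every other $\ell\mid N/d$ contributes $1$ (so $\ell\in\{2,3\}$).
\item $\delta_f=3$ and every $\ell\mid N/d$ contributes $1$.
\end{enumerate}

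\textbf{Your version contradicts Table~\ref{tab: triple covers} itself.} The rational newform of level $57$ with both Atkin--Lehner signs $+1$ has $a_2=-2$, $a_5=-3$, $a_7=-5$. Here $X_0^*(57)\cong E_f$ and $\delta_f=1$. This form gives $(285,57)$ and $(399,57)$, with $\ell=5$ and $\ell=7$. It also gives $(570,57)$ and $(798,57)$, which have $\omega(N/d)=2$. Further entries outside your two cases are:
\begin{itemize}
\item $(290,58)$, $(305,61)$, $(395,79)$, $(455,91)$, $(590,118)$, all with $\ell=5$;
\item $(462,77)$, $(870,58)$, $(910,91)$, all with $\omega(N/d)=2$;
\item $(402,201)$ and $(438,219)$, which have $\delta_f=3$, $d=N/2$ and $a_2=-2$.
\end{itemize}
A search pruned by your dichotomy would miss all fourteen. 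That includes the cover $X_0^*(399)\to X_0^*(57)$ used in Table~\ref{tab:exceptional_cm_fibers}. If you really brute-force every $d\mid N$, as your enumeration paragraph suggests, the false claim does no damage. It is still a false statement in the proof and must be replaced by the correct dichotomy.

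\textbf{The rest of the plan is sound.} The reduction uses the rank-one group $\Hom(J_0^*(N),E_f)$ and the Manin constant, which is the paper's argument. Minimality is automatic: a degree-$3$ map cannot factor through a nontrivial isogeny when $g\ge 2$, so there is nothing to ``replace''. Your extra check of other curves in the isogeny class only adds caution; the paper identifies $E$ with $E_f^C$ at that point.

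Your finiteness step is a valid alternative to the paper's count of supersingular points in Lemma~\ref{lem:supersingularbd}. You use gonality $\le 6$ over $\overline{\Q}$ together with the Abramovich--Kim--Sarnak bound of Remark~\ref{rem:genus}. This gives $\prod_{q\mid N}(q+1)/2\le 6\cdot 2^{15}/325<605$ directly, without passing through the smallest prime not dividing $N$.
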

 
\begin{proof}
We follow the basic strategy of \cite{BarsGonzalezRovira}.  Let $\psi \colon X_0^*(N) \to E$ be a triple cover, and let $p$ be the smallest
prime not dividing $N$.  Every prime $q < p$ divides $N$, so
$\prod_{q<p}(q+1)/2 \le \prod_{q \mid N} (q+1)/2$, and combining with
Lemma~\ref{lem:supersingularbd} below,
\begin{equation}\label{eq:inequality_fund}
  \prod_{q<p} \frac{q+1}{2} \le 12 \cdot \frac{3(p+1)^2 - 1}{p-1}.
\end{equation}
Write $L(p)$ and $R(p)$ for the left and right sides respectively.  One has $L(17) = 1512 > 728.25 =
R(17)$.  If $p \ge 17$ and $r$ is the next prime, then $r < 2p$ by Bertrand's
postulate, $L(r) = \tfrac{p+1}{2}L(p) \ge 9L(p)$, and $R(r) < R(2p) < 2R(p)$;
hence $L(p) > R(p)$ implies $L(r) > R(r)$, and by induction
\eqref{eq:inequality_fund} fails for every $p \ge 17$.  So the smallest prime not
dividing $N$ is less than $17$, and for each such $p$
Lemma~\ref{lem:supersingularbd} bounds $\prod_{q \mid N}(q+1)/2$, hence the size
and number of the prime divisors of $N$.  Only finitely many $N$ occur and they
can be enumerated.
 
The cover $\psi$ factors through a surjection $J_0^*(N) \twoheadrightarrow E$, so
$E$ is an isogeny factor of $J_0^*(N)$; thus $E$ is isogenous to $E_f$ for a
newform $f$ of some level $d \mid N$ whose Atkin--Lehner eigenvalues at the
primes dividing $d$ are all $+1$. 
The $f$-old space at level $N$ has basis $\{f(mz)\}_{m \mid N/d}$.  For
$\ell \mid d$, each basis vector is an eigenform of $w_\ell$ with eigenvalue
$\varepsilon_\ell(f) = +1$ \cite[(5.2)]{AtkinLehner}, so $w_\ell$ acts
trivially.  
For $\ell \mid N/d$,  the combinations $f(mz)\pm \ell f(\ell mz)$ are $w_\ell$-eigenforms with eigenvalues $\pm 1$ \cite[(5.1)]{AtkinLehner}.
Since the $w_\ell$ commute
and are simultaneously diagonalizable, their joint $+1$-eigenspace is
1-dimensional, spanned by $\sum_{m \mid N/d} m f(mz)$.  
Hence the $f$-isotypic
part of $J_0^*(N)$ is 1-dimensional and $\Hom(J_0^*(N),E)$ is free of rank
1. Writing $\rho$ for a generator we have $\psi = [n] \circ \rho$ up to translation, and
$3 = \deg \psi = n^2 \deg \rho$ forces $n = \pm 1$ and $\deg \rho = 3$.  
Since $g(X_0^*(N)) \ge 2$ for every $N$ under consideration, no morphism
$X_0^*(N) \to E$ has degree $1$. Hence $\psi$ does not factor as an isomorphism
followed by an isogeny, and $E$ is the target $E^C_f$ of
Proposition~\ref{prop: elliptic cover info and degree}.
Writing $\pi_f = [k] \circ \rho$, equation \eqref{eq: cover degree} gives
\[
  \delta_f \prod_{\ell \mid N/d} \bigl( \ell + 1 + a_\ell(f) \bigr)
  = \deg(\pi_f) = k^2 \deg(\rho) = 3k^2 .
\]
If $c= 0$ then $k = 1$ by Remark~\ref{rem: manin one}.  If
$c \ne 0$ then $E_f(\Q)[2] \neq 0$ by Lemma~\ref{lem: obstruction};
evaluating the left-hand side for every $d \mid N$ and every admissible $f$
over the finitely many $N$, one checks that
it is of the form $3k^2$ with $k > 1$ only for newforms with $c = 0$.  In
either case $k = 1$, and therefore
\[
  \delta_f \prod_{\ell \mid N/d} \bigl( \ell + 1 + a_\ell(f) \bigr) = 3 .
\]
Since $\delta_f$ is a positive integer and each factor is a positive integer, either
$\delta_f = 3$ and every $\ell \mid N/d$ has $\ell + 1 + a_\ell(f) = 1$, or
$\delta_f = 1$ and exactly one $\ell \mid N/d$ has $\ell+1+a_\ell(f) = 3$, the
others contributing $1$.  By the Weil bound, $\ell+1+a_\ell(f) = 3$ forces
$\ell \in \{2,3,5,7\}$ and $\ell+1+a_\ell(f) = 1$ forces $\ell \in \{2,3\}$.
 
If $\delta_f = 1$ then $X_0^*(d) \cong E^C_f$ has genus $1$, and the triple cover
is the map $X_0^*(N) \to X_0^*(d)$ of \eqref{eq: cover degree}, possibly after
adjoining the finitely many primes $\ell \in \{2,3\}$ contributing a factor $1$.
If $\delta_f = 3$ then either $d = N$, or $d < N$ and every $\ell \mid N/d$
satisfies $\ell + 1 + a_\ell(f) = 1$; an explicit computation shows the only such
examples are $X_0^*(402) \to E^C_{\ecl{201.a.1}}$ and $X_0^*(438) \to
E^C_{\ecl{219.a.1}}$, both arising from $\ell = 2$ with $a_2(f) = -2$.
Enumerating the surviving $N$ gives Table~\ref{tab: triple covers}.
\end{proof}
 
\begin{table}[ht!]
\centering
\small
\begin{tabular}{lll@{\qquad}lll@{\qquad}lll}
\toprule
$N$ & $d$ & $E$ & $N$ & $d$ & $E$ & $N$ & $d$ & $E$\\
\midrule
$154$ & $77$  & \ecl{77.a.1}  & $274$ & $274$ & \ecl{274.b.1} & $429$ & $143$ & \ecl{143.a.1}\\
$154$ & $154$ & \ecl{154.a.2} & $282$ & $141$ & \ecl{141.d.1} & $430$ & $430$ & \ecl{430.a.1}\\
$163$ & $163$ & \ecl{163.a.1} & $285$ & $57$  & \ecl{57.a.1}  & $434$ & $434$ & \ecl{434.a.2}\\
$185$ & $185$ & \ecl{185.c.2} & $285$ & $285$ & \ecl{285.b.2} & $438$ & $219$ & \ecl{219.a.1}\\
$201$ & $201$ & \ecl{201.a.1} & $286$ & $143$ & \ecl{143.a.1} & $455$ & $91$  & \ecl{91.a.1}\\
$202$ & $101$ & \ecl{101.a.1} & $286$ & $286$ & \ecl{286.b.1} & $462$ & $77$  & \ecl{77.a.1}\\
$214$ & $214$ & \ecl{214.a.1} & $290$ & $58$  & \ecl{58.a.1}  & $465$ & $155$ & \ecl{155.c.1}\\
$219$ & $219$ & \ecl{219.a.1} & $291$ & $291$ & \ecl{291.c.2} & $570$ & $57$  & \ecl{57.a.1}\\
$237$ & $79$  & \ecl{79.a.1}  & $305$ & $61$  & \ecl{61.a.1}  & $570$ & $190$ & \ecl{190.a.1}\\
$246$ & $123$ & \ecl{123.b.1} & $310$ & $155$ & \ecl{155.c.1} & $574$ & $574$ & \ecl{574.b.1}\\
$249$ & $83$  & \ecl{83.a.1}  & $318$ & $318$ & \ecl{318.a.1} & $590$ & $118$ & \ecl{118.a.1}\\
$254$ & $254$ & \ecl{254.a.1} & $354$ & $118$ & \ecl{118.a.1} & $798$ & $57$  & \ecl{57.a.1}\\
$258$ & $258$ & \ecl{258.b.1} & $393$ & $131$ & \ecl{131.a.1} & $870$ & $58$  & \ecl{58.a.1}\\
$262$ & $131$ & \ecl{131.a.1} & $395$ & $79$  & \ecl{79.a.1}  & $910$ & $91$  & \ecl{91.a.1}\\
$262$ & $262$ & \ecl{262.a.1} & $399$ & $57$  & \ecl{57.a.1}  &       &       & \\
$267$ & $89$  & \ecl{89.a.1}  & $402$ & $201$ & \ecl{201.a.1} &       &       & \\
$269$ & $269$ & \ecl{269.a.1} & $426$ & $142$ & \ecl{142.a.1} &       &       & \\
\bottomrule
\end{tabular}
\caption{Triples $(N,d,E)$ for which $X_0^*(N)$ admits a triple cover of the
elliptic curve $E$, with $d$ the level of the associated newform.  The label is
that of $E = E^C_f$, which differs from $E_f$ exactly when $c \ne 0$.
This occurs only at $N = d = 185$, where $E_f = \ecl{185.c.1}$.}
\label{tab: triple covers}
\end{table}

\begin{lemma}
\label{lem:supersingularbd}

Let $N$ be squarefree and suppose $\psi:X_0^*(N) \to E$ admits a degree 3 morphism over $\Q$ with $E$ an elliptic curve.  
Then for every prime $p \nmid N$, we have
\[ \frac{\Psi(N)}{2^{\omega(N)}} = \prod_{q\mid N}\frac{q+1}{2} \leq 12\cdot \frac{3(p+1)^2 - 1}{p - 1}\]
where $\Psi$ is the Dedekind psi function $\Psi(N) = \prod_{q\mid N}(q+1)$ for squarefree $N$.
\end{lemma}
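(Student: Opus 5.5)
The plan is to reduce modulo a prime $p \nmid N$ and count points over $\F_{p^2}$, in the spirit of Ogg's argument as used in \cite{BarsGonzalezRovira}. We play a lower bound coming from supersingular points against an upper bound coming from the triple cover.

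\emph{Good reduction.} Fix $p \nmid N$. The curve $X_0(N)$ has good reduction at $p$, and so does its quotient $X_0^*(N)$, since the Atkin--Lehner involutions extend over $\Z[1/N]$. The elliptic curve $E$ is isogenous to a factor of $J_0^*(N)$, so its conductor divides $N$ and $E$ has good reduction at $p$. The morphism $\psi$ extends over the smooth models, by the Néron mapping property applied to the Albanese map. Its reduction $\bar\psi \colon \bar X \to \bar E$ is therefore a nonconstant morphism of degree $3$, because degree is preserved in a flat family. Consequently every fiber of $\bar\psi$ over an $\F_{p^2}$-point contains at most $3$ $\F_{p^2}$-points, and by the Hasse bound
\[ \#\bar X(\F_{p^2}) \le 3\,\#\bar E(\F_{p^2}) \le 3(p+1)^2 . \]

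\emph{Lower bound from supersingular points.} The supersingular points of $X_0(N)_{\F_p}$, i.e.\ pairs $(E_0, C_N)$ with $E_0$ supersingular, are all defined over $\F_{p^2}$. To see this, choose models over $\F_{p^2}$ on which the $p^2$-Frobenius acts as $-p$; it then fixes every subgroup. By the Eichler--Deuring mass formula, $\sum_x 1/e_x = (p-1)\Psi(N)/12$ with $e_x = \#\Aut(x)/2 \ge 1$, so the number of distinct supersingular points on $X_0(N)$ is at least $(p-1)\Psi(N)/12$. The fibers of $X_0(N) \to X_0^*(N)$ have at most $2^{\omega(N)}$ points. Hence $\bar X$ has at least $(p-1)\Psi(N)/(12\cdot 2^{\omega(N)})$ supersingular $\F_{p^2}$-points. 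The cusp gives one further, ordinary, $\F_p$-rational point.

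\emph{Conclusion.} Combining the two bounds,
\[ \frac{(p-1)\Psi(N)}{12\cdot 2^{\omega(N)}} + 1 \le 3(p+1)^2 , \]
which rearranges to the stated inequality.

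The step I expect to be most delicate is the reduction. We must justify both that $X_0^*(N)$ has a smooth model at $p$ whose $\F_{p^2}$-points include the images of the supersingular points and the cusp, and that $\psi$ specializes to a map of degree exactly $3$. Rather than relying on the bound $3\#\bar E(\F_{p^2})$ alone, I would double-check the latter via pullback of degree-$0$ divisors, using $\bar\psi_*\bar\psi^* = [3]$.
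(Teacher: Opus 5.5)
Your strategy is the paper's: supersingular points and cusps over $\F_{p^2}$ are played against $3\,\#E(\F_{p^2}) \le 3(p+1)^2$, with the Eichler--Deuring mass formula and the Frobenius-equals-$-p$ models supplying the lower bound. The difference is that you run the count on the quotient $X_0^*(N)$ instead of on $X_0(N)$, and that is where the proposal has a hole.

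You justify good reduction of $X_0^*(N)$ at $p$ by saying that $W(N)$ extends over $\Z[1/N]$. That only works when $\lvert W(N)\rvert = 2^{\omega(N)}$ is invertible at $p$, because then the quotient of the smooth model is smooth and its formation commutes with passing to the special fiber. For $p=2$ and $N$ odd the action is wild. In that case, smoothness of $X_0(N)_{\Z_{(2)}}/W(N)$ over $\Z_{(2)}$ is not a formal consequence of the involutions extending. Both of your later steps rest on it: the reduction $\bar\psi$ of degree $3$, and pushing the supersingular points down to $\bar X$ with fibers of size at most $2^{\omega(N)}$. This case is not marginal. Proposition~\ref{prop: triple cover classification} applies the lemma with $p$ the smallest prime not dividing $N$, which is $p=2$ for every odd $N$.

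The statement you need is true. One route: a finite quotient of a semistable curve is semistable (Raynaud), and since $J_0^*(N)$ has good reduction at $p$, the irreducible special fiber can carry no node. But your proposal does not supply this, and it is unnecessary. The paper never reduces the quotient. It composes to get $X_0(N)\to X_0^*(N)\to E$ of degree $3\cdot 2^{\omega(N)}$ and extends this over the smooth model of $X_0(N)$ at $p$. It then counts on $X_0(N)(\F_{p^2})$, which contains the $2^{\omega(N)}$ cusps and at least $(p-1)\Psi(N)/12$ supersingular points. This gives $2^{\omega(N)} + (p-1)\Psi(N)/12 \le 3\cdot 2^{\omega(N)}(p+1)^2$. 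Dividing by $2^{\omega(N)}$ gives exactly your inequality. In effect, you should put the factor $2^{\omega(N)}$ into the degree of the map rather than divide by it on the curve.
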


\begin{proof}
The composite morphism $X_0(N) \to E$ has degree $3 \cdot 2^{\omega(N)}$.
Then $X_0(N)(\F_{p^2})$ has $2^{\omega(N)}$ rational cusps and at least $(p-1)\Psi(N)/12$ supersingular points \cite[Lemmas 3.20 and 3.21]{BGGP}.
Hence \[ 2^{\omega(N)} + (p-1)\Psi(N)/12 \leq 3 \cdot 2^{\omega(N)} \#E(\F_{p^2}) \leq 3  \cdot 2^{\omega(N)}  (p+1)^2\]
where we apply the bound $\# E(\F_{p^2}) \leq (p+1)^2$ to obtain the second inequality. Rearranging gives the lemma. 
\end{proof}

Table \ref{tab:exceptional_cm_fibers} was constructed by running our algorithms on the pairs $(N,E)$ that appear in Proposition \ref{prop: triple cover classification} and all fibers containing a CM point or cusp. Thus, there are provably no more exceptional points that are explained by triple cover. 
This gives additional evidence for Conjecture \ref{conj:explicitbd}.

We can also ask about exceptional points arising from higher degree covers.  For
the elliptic cover construction to work in degree $\delta$, one needs
$\delta - 2$ ``coincidences'', since $\delta-2$ further points in the fibre of a
special point must themselves be special; so for $\delta \ge 4$ we expect this to
be exceedingly rare, and even when the construction produces a rational point it
is not forced to be exceptional.  We found no such exceptional points in degree $\delta \geq 4$.  It is worth noting that
apparent instances can arise spuriously: at $N = 455$ the quotient $X_0^*(65) =
\ecl{65.a.2}$ is the target of a degree-$4$ map from $X_0^*(455)$, but by
Proposition~\ref{prop: elliptic cover info and degree} that map is the composite
of a \emph{double} cover $X_0^*(455) \to \ecl{65.a.1}$ with the $2$-isogeny
$\lambda$, and the corresponding explanation is the bielliptic one already
recorded in Table~\ref{tab:exceptional_points_genus_3}.

\bibliographystyle{amsalpha}
\bibliography{bibliography}

\end{document}